\newcommand{\N}{{\mathbf N}}
\newcommand{\R}{{\mathbf R}}
\newcommand{\Z}{{\mathbf Z}}
\newcommand{\C}{{\mathbf C}}
\newcommand{\RR}{{\rm R}}
\newtheorem{thm}[lemma]{Theorem}
\newtheorem{cor}[lemma]{Corollary}
\newtheorem{defn}[lemma]{Definition}
\def\eqnarray{\stepcounter{equation}\let\@currentlabel=\theequation
\global\@eqnswtrue
\tabskip\@centering\let\\=\@eqncr
$$\halign to \displaywidth\bgroup\hfil\global\@eqcnt\z@
  $\displaystyle\tabskip\z@{##}$&\global\@eqcnt\@ne
  \hfil$\displaystyle{{}##{}}$\hfil
  &\global\@eqcnt\tw@ $\displaystyle{##}$\hfil
  \tabskip\@centering&\llap{##}\tabskip\z@\cr}
\def\endeqnarray{\@@eqncr\egroup
      \global\advance\c@equation\m@ne$$\global\@ignoretrue}
\def\@yeqncr{\@ifnextchar [{\@xeqncr}{\@xeqncr[5pt]}}
\begin{document}

\renewcommand{\PaperNumber}{***}

\FirstPageHeading

\ShortArticleName{
Generalizations of
generating function 
for
hypergeometric orthogonal polynomials}

\ArticleName{Generalizations of 
generating functions for hypergeometric orthogonal polynomials with definite integrals}

\Author{Howard S.~COHL, $^\dag\!\!\ $ Connor MACKENZIE $^\S$$^\dag$
and Hans VOLKMER $^\ddag$} 
\AuthorNameForHeading{H.~S.~Cohl, C. MacKenzie \& H.~Volkmer}

\Address{$^\dag$~Applied and Computational Mathematics Division, 
National Institute of Standards and Technology, 
Gaithersburg, MD 20899-8910, USA
} 
\EmailD{howard.cohl@nist.gov} 

\URLaddressD{http://hcohl.sdf.org} 

\Address{$^\S$~Department of Mathematics, Westminster College, 319 South Market Street, 
New Wilmington, PA 16172, USA
}
\EmailD{mackcm22@wclive.westminster.edu} 

\Address{$^\ddag$~Department of Mathematical Sciences, 
University of Wisconsin-Milwaukee, P.O. Box 413, 
Milwaukee, WI  53201, USA
}
\EmailD{volkmer@uwm.edu} 


\ArticleDates{Received XX September 2012 in final form ????; Published online ????}

\Abstract{We generalize generating functions for hypergeometric orthogonal
polynomials, namely 
Jacobi, Gegenbauer,
Laguerre, and 
Wilson 
polynomials.  
These generalizations of generating functions are accomplished through 
series rearrangement using connection relations with one free parameter
for these orthogonal polynomials.  We also use orthogonality relations 
to determine corresponding definite integrals.}

\Keywords{Orthogonal polynomials; Generating functions; Connection coefficients; Generalized hypergeometric functions;
Eigenfunction expansions; Definite integrals}

\Classification{33C45, 05A15, 33C20, 34L10, 30E20}

\section{Introduction}
\label{Introduction}

In this paper, we apply connection relations 
(see for instance Andrews {\it et al.}~(1999) \cite[Section 7.1]{AAR};
Askey (1975) \cite[Lecture 7]{Askey75})
with one free parameter 
for the Jacobi, Gegenbauer, Laguerre, and Wilson polynomials 
(see Chapter 18 in \cite{NIST})
to generalize generating functions for these orthogonal polynomials
using series rearrangement. 
This is because connection relations with one free parameter only involve a 
summation 
over products of gamma functions and are straightforward to sum.
We have already applied our series rearrangment technique using a connection relation
with one free parameter to the generating 
function for Gegenbauer polynomials
\cite[(18.12.4)]{NIST}
\begin{equation}
\frac{1}{(1+\rho^2-2\rho x)^\nu}=\sum_{n=0}^\infty\rho^nC_n^\nu(x).
\label{generatingfnforgeg}
\end{equation}
The connection relation for Gegenbauer polynomials is
given in Olver {\it et al.}~(2010) \cite[(18.18.16)]{NIST} 
(see also Ismail (2005) \cite[(9.1.2)]{Ismail}), namely
\begin{equation}
C_n^\nu(x)=\frac{1}{\mu}\sum_{k=0}^{\lfloor n/2\rfloor} 
(\mu+n-2k)
\frac{
(\nu-\mu)_k\,(\nu)_{n-k}}{k!(\mu+1)_{n-k}}
C_{n-2k}^\mu(x).
\label{GegenConnect}
\end{equation}
Inserting (\ref{GegenConnect}) into (\ref{generatingfnforgeg}), we 
obtained a result 
\cite[(1)]{CohlGenGegen}
which generalizes 
(\ref{generatingfnforgeg}), namely
\begin{equation}
\frac{1}{(1+\rho^2-2\rho x)^\nu}=\sum_{n=0}^\infty f_n^{(\nu,\mu)}(\rho)C_n^\mu(x),
\label{generatingfnforgeggen}
\end{equation}
where $f_n^{(\nu,\mu)}:\{z\in\C:0<|z|<1\}\setminus(-1,0)\to\C$ is defined by
\[
f_n^{(\nu,\mu)}(\rho):=\frac{\Gamma(\mu)e^{i\pi(\mu-\nu+1/2)}(n+\mu)}
{\sqrt{\pi}\,\Gamma(\nu)\rho^{\mu+1/2}(1-\rho^2)^{\nu-\mu-1/2}}
Q_{n+\mu-1/2}^{\nu-\mu-1/2}\left(\frac{1+\rho^2}{2\rho}\right),
\]
where $Q_\nu^\mu$ is the associated Legendre function of the second kind
\cite[Chapter 14]{NIST}.  It is easy to demonstrate that 
$f_n^{(\nu,\nu)}(\rho)=\rho^n$. We have also succesfully applied this technique to 
an extension of (\ref{generatingfnforgeg}) expanded in Jacobi polynomials
using a connection relation with two free parameters in Cohl (2010) 
\cite[Theorem 5.1]{Cohl12pow}.  In this case the coefficients of the expansion are
given in terms of Jacobi functions of the second kind.
Applying this technique 
with connection relations with more than one free parameter is therefore
possible, but it is more intricate and involves rearrangement and 
summation of three or more increasingly complicated sums.  The goal of
this paper is to demonstrate the effectiveness of the series rearrangement 
technique using connection relations with one free parameter by applying
it to some of the most fundamental generating functions for hypergeometric
orthogonal polynomials.

\medskip

\noindent Unless otherwise stated, the domains of convergence given
in this paper are those of the original generating function and/or
its corresponding definite integral.  In this paper, we only justify
summation interchanges for a few of the theorems we present.
For the interchange justifications we have given, we give all the details.
However, for the sake of brevity, we leave justification for the 
remaining interchanges to the reader.

\medskip

\noindent Here we will make a short review of the special functions which
are used in this paper.  The generalized hypergeometric function
${_p}F_q:\C^p\times(\C\setminus\N_0)^q\times\left\{z\in\C:|z|<1\right\}\to\C$ 
(see Chapter 16 in Olver {\it et al.}~(2010) \cite{NIST}) is defined as
\[
{_p}F_q\left(
\begin{array}{c}
a_1,\dots,a_p\\[0.2cm]
b_1,\dots,b_q
\end{array};z\right)
:=\sum_{n=0}^\infty \frac{(a_1)_n\,\dots\,(a_p)_n}{(b_1)_n\,\dots\,(b_q)_n} \frac{z^n}{n!}.
\]
When $p=2,\,q=1,$ this is the special case referred to as the Gauss hypergeometric function
${}_2F_1:\C^2\times(\C\setminus -\N_0)\times\left\{z\in\C:|z|<1\right\}\to\C$ (see Chapter
15 of Olver {\it et al.}~(2010) \cite{NIST}).
When $p=1,\,q=1$ this is Kummer's confluent hypergeometric function of the first kind
$M:\C\times(\C\setminus\N_0)\times\C\to\C$ (see Chapter 13 in Olver {\it et al.}~(2010)
\cite{NIST}), namely
\medskip
\[
M(a,b,z):=\sum_{n=0}^\infty\frac{(a)_n}{(b)_n}\frac{z^n}{n!}=
{_1}F_1\left(\begin{array}{c}a\\b\end{array};z\right).
\]
When $p=0$, $q=1$, this is related to the Bessel function of the first kind
(see Chapter 10 in Olver {\it et al.}~(2010) \cite{NIST})
$J_\nu:\C\setminus\{0\}\to\C$, for $\nu\in\C$, defined by
\begin{equation}
J_\nu(z):=\frac{(z/2)^\nu}{\Gamma(\nu+1)}\,{}_0F_1\left(
\begin{array}{c}-\\ \nu+1
\end{array};\frac{-z^2}{4}
\right).
\label{BesselJ}
\end{equation}
The case of $p=0$, $q=1$ is also related to the 
modified Bessel function of the first kind
(see Chapter 10 in Olver {\it et al.}~(2010) \cite{NIST})
$I_\nu:\C\setminus(-\infty,0]\to\C$, for $\nu\in\C$, defined by
\[
I_\nu(z):=\frac{(z/2)^\nu}{\Gamma(\nu+1)}\,{_0}F_1\left(
\begin{array}{c}-\\ \nu+1
\end{array};\frac{z^2}{4}\right).
\]
When $p=1$, $q=0$, this is the binomial expansion
(see for instance Olver {\it et al.}~(2010) \cite[(15.4.6)]{NIST}), namely
\begin{equation}
{_1}F_0\left(
\begin{array}{c}
\alpha\\[0.2cm]
-
\end{array};z\right)=(1-z)^{-\alpha}.
\label{binomialexp}
\end{equation}

\medskip

\noindent In these sums, the Pochhammer symbol (rising factorial) $(\cdot)_n:\C\to\C$ \cite[(5.2.4)]{NIST} is defined by
\[
(z)_n:=\prod_{i=1}^n(z+i-1),\nonumber
\]
where $n\in\N_0$. Also, when $z\notin -\N_0$ we have (see \cite[(5.2.5)]{NIST})
\begin{equation}
(z)_n=\frac{\Gamma(z+n)}{\Gamma(z)},
\label{Gammaz+n}
\end{equation}
where $\Gamma:\C\setminus-\N_0\to\C$ is the gamma function (see Chapter 5 in Olver {\it et al.}~(2010)
\cite{NIST}).

\medskip

\noindent Throughout this paper we rely on the following definitions.  
For $a_1,a_2,a_3,\ldots\in\C$, if $i,j\in\Z$ and $j<i$ then
$\sum_{n=i}^{j}a_n=0$ and $\prod_{n=i}^ja_n=1$.
The set of natural numbers is given by $\N:=\{1,2,3,\ldots\}$, the set
$\N_0:=\{0,1,2,\ldots\}=\N\cup\{0\}$, and the set
$\Z:=\{0,\pm 1,\pm 2,\ldots\}.$  The set $\R$ represents the real numbers.

\section{Expansions in Jacobi polynomials}
\label{Jacobifunctiongeneralization}

The Jacobi polynomials $P_n^{(\alpha,\beta)}:\C\to\C$ can be defined 
in terms of a terminating Gauss hypergeometric series as follows
(Olver {\it et al.}~(2010) \cite[(18.5.7)]{NIST})
\[
P_n^{(\alpha,\beta)}(z):=
\frac{(\alpha+1)_n}{n!}\,
{}_2F_1\left(
\begin{array}{c}
-n,n+\alpha+\beta+1\\[0.1cm]
\alpha+1
\end{array};
\frac{1-z}{2}
\right),
\]
for $n\in\N_0$, and 
$\alpha,\beta>-1$ such that if $\alpha,\beta\in(-1,0)$ then $\alpha+\beta+1\neq 0$.
The orthogonality relation for Jacobi polynomials can be found in 
Olver {\it et al.}~(2010) \cite[(18.2.1), (18.2.5), Table 18.3.1]{NIST}
\begin{equation}
\int_{-1}^1
P_m^{(\alpha,\beta)}(x)
P_n^{(\alpha,\beta)}(x)
(1-x)^\alpha
(1+x)^\beta
dx
=
\frac{2^{\alpha+\beta+1}\Gamma(\alpha+n+1)\Gamma(\beta+n+1)}
{(2n+\alpha+\beta+1)\Gamma(\alpha+\beta+n+1)n!}
\delta_{m,n}.
\label{JacobiOrthogonality}
\end{equation}
A connection relation with one free parameter for Jacobi polynomials can 
be found in Olver {\it et al.}~(2010) \cite[(18.18.14)]{NIST}, namely
\begin{eqnarray}
&&\hspace{-1.3cm}P_n^{(\alpha,\beta)}(x)
=\frac{(\beta+1)_n}{(\gamma+\beta+1)(\gamma+\beta+2)_n}\nonumber\\[0.2cm]
&&\hspace{+1.0cm}\times\sum_{k=0}^n \frac{(\gamma+\beta+2k+1)(\gamma+\beta+1)_k\,
(n+\beta+\alpha+1)_k(\alpha-\gamma)_{n-k}}
{(\beta+1)_k\,(n+\gamma+\beta+2)_k(n-k)!}
P_k^{(\gamma,\beta)}(x).
\label{JacConnect}
\end{eqnarray}
In the remainder of the paper, we will use the following
global notation $\RR:=\sqrt{1+\rho^2-2\rho x}$.

\begin{thm}
Let $\alpha\in\C$, $\beta,\gamma>-1$ such that if $\beta,\gamma\in(-1,0)$ then $\beta+\gamma+1\neq 0$,
$\rho\in\{z\in\C:|z|<1\}$, $x\in[-1,1]$. Then
\begin{eqnarray}
&&\hspace{-1.3cm}\frac{2^{\alpha+\beta}}
{\RR\left(1+\RR-\rho\right)^\alpha\left(1+\RR+\rho\right)^\beta}\nonumber\\[0.2cm]
&&\hspace{+0.6cm}=\frac{1}{\gamma+\beta+1}
\sum_{k=0}^\infty
\frac{(2k+\gamma+\beta+1)(\gamma+\beta+1)_k\,
\left(\frac{\alpha+\beta+1}{2}\right)_k\,\left(\frac{\alpha+\beta+2}{2}\right)_k}
{(\alpha+\beta+1)_k\,\left(\frac{\gamma+\beta+2}{2}\right)_k\,\left(\frac{\gamma+\beta+3}{2}\right)_k}\nonumber\\[0.2cm]
&&\hspace{+3.4cm}\times\,{_3}F_2\left(\begin{array}{c}
\beta+k+1,\alpha+\beta+2k+1,\alpha-\gamma\\[0.1cm]
\alpha+\beta+k+1,\gamma+\beta+2k+2\end{array};\rho\right)
\rho^k P_k^{(\gamma,\beta)}(x).
\label{JacGenGen}
\end{eqnarray}
\label{JacGeneralGen}
\end{thm}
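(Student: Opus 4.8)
The plan is to start from the classical generating function for Jacobi polynomials \cite[(18.12.1)]{NIST},
\[
\frac{2^{\alpha+\beta}}{\RR\left(1+\RR-\rho\right)^\alpha\left(1+\RR+\rho\right)^\beta}=\sum_{n=0}^\infty P_n^{(\alpha,\beta)}(x)\,\rho^n,
\]
valid for $x\in[-1,1]$ and $|\rho|<1$, and to expand each $P_n^{(\alpha,\beta)}(x)$ on the right by means of the one-parameter connection relation (\ref{JacConnect}), which writes it as a finite sum over $0\le k\le n$ of multiples of $P_k^{(\gamma,\beta)}(x)$. This produces a double sum over $n$ and $k$, and the first real step is to justify interchanging the two summations so that $k$ becomes the outer index ranging over $\N_0$ while $n$ runs over $n\ge k$. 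For $x\in[-1,1]$ the polynomials $P_k^{(\gamma,\beta)}(x)$ grow at most polynomially in $k$, while the connection coefficients in (\ref{JacConnect}) are quotients of gamma functions which, for a fixed shift $n-k$, grow at most polynomially in $n$; hence the iterated sum of absolute values is dominated by a convergent series of the form $\sum_n (n+1)\,p(n)\,|\rho|^n$ with $p$ a polynomial, so the double series converges absolutely and the order of summation may be reversed.

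After interchanging I would set $n=k+j$, $j\ge0$, and pull the $j$-independent factors $\rho^k$ and $(\gamma+\beta+2k+1)(\gamma+\beta+1)_k/\bigl((\gamma+\beta+1)(\beta+1)_k\bigr)$ out of the inner sum. In what remains I would use the elementary identities $(\beta+1)_{k+j}=(\beta+1)_k(\beta+k+1)_j$, $(\gamma+\beta+2)_{k+j}=(\gamma+\beta+2)_k(\gamma+\beta+k+2)_j$, $(k+j+\alpha+\beta+1)_k=(\alpha+\beta+1)_{2k+j}/(\alpha+\beta+1)_{k+j}$ and $(k+j+\gamma+\beta+2)_k=(\gamma+\beta+2)_{2k+j}/(\gamma+\beta+2)_{k+j}$. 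The factor $(\gamma+\beta+2)_{k+j}$ then cancels, and splitting each Pochhammer symbol in $2k+j$ (respectively $k+j$) into its $2k$-part (respectively $k$-part) times a shifted $j$-part leaves
\[
\sum_{j=0}^\infty\frac{(\beta+k+1)_j\,(\alpha+\beta+2k+1)_j\,(\alpha-\gamma)_j}{(\alpha+\beta+k+1)_j\,(\gamma+\beta+2k+2)_j}\,\frac{\rho^j}{j!}
\]
multiplied by $(\beta+1)_k\,(\alpha+\beta+1)_{2k}/\bigl((\alpha+\beta+1)_k\,(\gamma+\beta+2)_{2k}\bigr)$; the displayed series is exactly the ${_3}F_2$ appearing in (\ref{JacGenGen}), and its $(\beta+1)_k$ cancels against the one removed earlier.

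To put the prefactor in the stated form it only remains to rewrite $(\alpha+\beta+1)_{2k}/(\gamma+\beta+2)_{2k}$. Applying the Pochhammer duplication formula $(a)_{2k}=2^{2k}\left(\frac{a}{2}\right)_k\left(\frac{a+1}{2}\right)_k$ to numerator and denominator makes the powers of $4$ cancel and gives
\[
\frac{(\alpha+\beta+1)_{2k}}{(\gamma+\beta+2)_{2k}}=\frac{\left(\frac{\alpha+\beta+1}{2}\right)_k\left(\frac{\alpha+\beta+2}{2}\right)_k}{\left(\frac{\gamma+\beta+2}{2}\right)_k\left(\frac{\gamma+\beta+3}{2}\right)_k},
\]
after which the coefficient of $P_k^{(\gamma,\beta)}(x)$ is precisely the summand in (\ref{JacGenGen}). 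I expect the only genuine obstacle to be the interchange-of-summation justification in the first paragraph; everything after that is routine manipulation of Pochhammer symbols. As a consistency check, putting $\gamma=\alpha$ makes $(\alpha-\gamma)_j$ vanish for $j\ge1$ so the ${_3}F_2$ collapses to $1$, the half-integer ratio above becomes $(\alpha+\beta+1)_{2k}/(\alpha+\beta+2)_{2k}$, and a one-line computation returns $\sum_k P_k^{(\alpha,\beta)}(x)\,\rho^k$, recovering the original generating function.
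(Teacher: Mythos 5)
Your proposal is correct and follows essentially the same route as the paper: insert the one-parameter connection relation (\ref{JacConnect}) into the generating function (\ref{JacGenerating}), justify the interchange of the double sum by absolute convergence with polynomial bounds, shift $n\mapsto k+j$, and simplify the Pochhammer symbols (duplication formula) to reveal the ${_3}F_2$. The only cosmetic differences are that the paper bounds the connection coefficients via the orthogonality integrals together with Szeg\H{o}'s estimate and then extends from $\alpha>-1$ to $\alpha\in\C$ by analytic continuation, whereas you bound them directly from their gamma-quotient form (a bound you should state uniformly in $k\le n$, not just for fixed $n-k$) and you write out the Pochhammer algebra that the paper leaves as ``simplification.''
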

\noindent {\bf Proof.} Olver {\it et al.}~(2010) \cite[(18.12.1)]{NIST}
give the generating for Jacobi polynomials, namely
\begin{equation}
\frac{2^{\alpha+\beta}}
{\RR\left(1+\RR-\rho\right)^\alpha\left(1+\RR+\rho\right)^\beta}
=\sum_{n=0}^\infty \rho^n P_n^{(\alpha,\beta)}(x).
\label{JacGenerating}
\end{equation}
This generating function is special in that it is the only 
known algebraic generating function for Jacobi polynomials
(see \cite[p.~90]{Ismail}).
Using the Jacobi connection relation (\ref{JacConnect}) in (\ref{JacGenerating}) produces a double sum.
In order to justify reversing the order of the double summation
expression we show that 
\begin{equation}\label{eq1}
\sum_{n=0}^\infty |c_n|\sum_{k=0}^n |a_{nk}|\left|P_k^{(\alpha,\beta)}(x)\right|< \infty ,
\end{equation}
where $c_n=\rho^n$ and $a_{nk}$ are the connection coefficients satisfying
\[ P_n^{(\alpha,\beta)}(x)=\sum_{k=0}^n a_{nk}P_k^{(\gamma,\beta)}(x) .\] 
We assume that $\alpha,\beta,\gamma>-1$, $x\in[-1,1],$ and $|\rho|<1$.
It follows from \cite[Theorem 7.32.1]{Szego} that 
\begin{equation}\label{eq2}
 \max_{x\in[-1,1]}\left|P_n^{(\alpha,\beta)}(x)\right|\le K_1(1+n)^\sigma ,
\end{equation}
where $K_1$ and $\sigma$ are positive constants.
In order to estimate $a_{nk}$ we use 
\[ a_{nk}=\frac{\int_{-1}^1(1-x)^{\gamma}(1+x)^\beta P_n^{(\alpha,\beta)}(x)P_k^{(\gamma,\beta)}(x)\,dx}{\int_{-1}^1(1-x)^{\gamma}(1+x)^\beta \{P_k^{(\gamma,\beta)}(x)\}^2\,dx} .\]
Using \eqref{eq2} we have
\[ \left|\int_{-1}^1(1-x)^\gamma(1+x)^\beta P_n^{(\alpha,\beta)}(x)P_k^{(\gamma,\beta)}(x)\,dx\right|\le K_2(1+k)^\sigma(1+n)^\sigma .\]
Using (\ref{JacobiOrthogonality}), we get 
\[  \left|\int_{-1}^1(1-x)^{\gamma}(1+x)^\beta \{P_k^{(\gamma,\beta)}(x)\}^2\,dx \right|\ge \frac{K_3}{1+k} \]
where $K_3>0$.
Therefore, 
\[ |a_{nk}|\le K_4 (1+k)^{\sigma+1}(1+n)^\sigma .\]
Finally, we show \eqref{eq1}:
\begin{eqnarray*}
 \sum_{n=0}^\infty |c_n|\sum_{k=0}^n |a_{nk}|
 \left|P_k^{(\alpha,\beta)}(x)\right|&\le &K_5 \sum_{n=0}^\infty |\rho|^n 
 \sum_{k=0}^n (1+k)^{2\sigma+1}(1+n)^\sigma \\
&\le & K_5 \sum_{n=0}^\infty |\rho|^n (1+n)^{3\sigma+2}<\infty,
\end{eqnarray*}
because $|\rho|<1$.
Reversing the order of the summation and shifting the $n$-index by $k$
with simplification, and by analytic continuation in $\alpha$, 
we produce the generalization (\ref{JacGenGen}).
$\hfill\blacksquare$

\begin{thm}
Let $\alpha\in\C$, $\beta,\gamma>-1$ such that if $\beta,\gamma\in(-1,0)$ then $\beta+\gamma+1\neq 0$,
$\rho\in\{z\in\C:|z|<1\}$, $x\in(-1,1)$. Then
\begin{eqnarray}
&&\hspace{-0.6cm}\left(\frac{2}{(1-x)\rho}\right)^{\alpha/2}\left(\frac{2}{(1+x)\rho}\right)^{\beta/2}
J_\alpha\left(\sqrt{2(1-x)\rho}\right)I_\beta\left(\sqrt{2(1+x)\rho}\right)\nonumber\\[0.2cm]
&&\hspace{-0.0cm}=\frac{1}{(\gamma+\beta+1)\Gamma(\alpha+1)\Gamma(\beta+1)}
\sum_{k=0}^\infty\frac{(2k+\gamma+\beta+1)(\gamma+\beta+1)_k\,
\left(\frac{\alpha+\beta+1}{2}\right)_k\,\left(\frac{\alpha+\beta+2}{2}\right)_k}
{(\alpha+1)_k\,(\beta+1)_k\,(\alpha+\beta+1)_k\,
\left(\frac{\gamma+\beta+2}{2}\right)_k\,\left(\frac{\gamma+\beta+3}{2}\right)_k}\nonumber\\[0.2cm]
&&\hspace{+3.0cm}\times\,{_2}F_3\left(\begin{array}{c}
2k+\alpha+\beta+1,\alpha-\gamma\\[0.1cm]
\alpha+\beta+k+1,\gamma+\beta+2k+2,\alpha+k+1\end{array};\rho\right)\rho^kP_k^{(\gamma,\beta)}(x).
\label{Jac2F3}
\end{eqnarray}
\label{ThmJac2F3}
\end{thm}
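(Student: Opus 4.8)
The plan is to imitate the proof of Theorem~\ref{JacGeneralGen}, but starting from the ``Bessel-type'' generating function for Jacobi polynomials rather than the algebraic one. First I would rewrite the left-hand side of~(\ref{Jac2F3}) as a Jacobi series: substituting the definition~(\ref{BesselJ}) of $J_\alpha$ and the analogous series for $I_\beta$, the factors $(2/((1-x)\rho))^{\alpha/2}$ and $(2/((1+x)\rho))^{\beta/2}$ exactly cancel the monomial prefactors of the two Bessel functions, so that the left-hand side of~(\ref{Jac2F3}) equals
\[
\frac{1}{\Gamma(\alpha+1)\Gamma(\beta+1)}\;
{}_0F_1\!\left(\begin{array}{c}-\\\alpha+1\end{array};\tfrac12(x-1)\rho\right)\,
{}_0F_1\!\left(\begin{array}{c}-\\\beta+1\end{array};\tfrac12(x+1)\rho\right),
\]
which, by the generating function \cite[(18.12.2)]{NIST}, is $\sum_{n=0}^\infty \rho^n P_n^{(\alpha,\beta)}(x)/(\Gamma(\alpha+n+1)\Gamma(\beta+n+1))$.

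Next I would substitute the connection relation~(\ref{JacConnect}) into this series and reverse the order of the resulting double sum. The interchange is justified just as in the proof of Theorem~\ref{JacGeneralGen}; in fact it is easier here, because the new coefficients $b_n=1/(\Gamma(\alpha+n+1)\Gamma(\beta+n+1))$ decay super-exponentially (under the assumption $\alpha>-1$ used for the rearrangement) while the connection coefficients and $\max_{x\in[-1,1]}|P_k^{(\gamma,\beta)}(x)|$ grow only polynomially, so --- as elsewhere in the paper --- I would only indicate this step. After reversing the summations and shifting the inner index $n\mapsto n+k$, the coefficient of $\rho^k P_k^{(\gamma,\beta)}(x)$ is a series over the shifted index, say $m$.

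The heart of the proof is to recognize that inner $m$-series as a ${}_2F_3$. Here I would expand every Pochhammer symbol containing $m$ by $(a)_{m+k}=(a)_k(a+k)_m$; the factor $(\beta+1)_{m+k}$ produced by~(\ref{JacConnect}) cancels the $1/\Gamma(\beta+m+k+1)$ coming from $b_{m+k}$, and, crucially, the numerator parameter $\gamma+\beta+k+2$ cancels against a denominator parameter --- this is exactly what reduces an apparent ${}_3F_4$ to the ${}_2F_3$ displayed in~(\ref{Jac2F3}). Collecting the $m$-independent factors into the $k$-dependent coefficient, and rewriting $(\alpha+\beta+k+1)_k=(\alpha+\beta+1)_{2k}/(\alpha+\beta+1)_k$ and $(\gamma+\beta+k+2)_k=(\gamma+\beta+2)_{2k}/(\gamma+\beta+2)_k$ by means of the duplication formulas $(\alpha+\beta+1)_{2k}=4^k(\tfrac{\alpha+\beta+1}{2})_k(\tfrac{\alpha+\beta+2}{2})_k$ and $(\gamma+\beta+2)_{2k}=4^k(\tfrac{\gamma+\beta+2}{2})_k(\tfrac{\gamma+\beta+3}{2})_k$, one obtains the half-integer Pochhammer symbols in the statement. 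Finally, as in Theorem~\ref{JacGeneralGen}, the restriction $\alpha>-1$ is removed by analytic continuation in $\alpha$, and~(\ref{Jac2F3}) follows.

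The main obstacle is the bookkeeping in the preceding paragraph: keeping straight which Pochhammer symbols depend on $m$ and which on $k$, and in particular noticing the $\gamma+\beta+k+2$ cancellation, without which the series would come out as a ${}_3F_4$ rather than a ${}_2F_3$. Everything else is the same routine series rearrangement used for Theorem~\ref{JacGeneralGen}, together with the one nonroutine input of correctly identifying the starting generating function~\cite[(18.12.2)]{NIST} in its Bessel-function form.
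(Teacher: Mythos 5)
Your proposal is correct and follows essentially the same route as the paper: start from the Bessel-type generating function \cite[(18.12.2)]{NIST}, insert the connection relation (\ref{JacConnect}), reverse the double sum, shift $n\mapsto n+k$, and simplify the inner series to the stated ${}_2F_3$ (the paper states this tersely and, as announced in the introduction, omits the interchange justification for this theorem, which your sketch supplies). Your Pochhammer bookkeeping, including the cancellation that prevents a ${}_3F_4$ and the duplication-formula step producing the half-integer parameters, checks out against the stated coefficient.
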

\noindent {\bf Proof.}
Olver {\it et al.} \cite[(18.12.2)]{NIST} give a generating function for Jacobi polynomials, namely
\begin{eqnarray}
&&\hspace{-1.1cm}\left(\frac{2}{(1-x)\rho}\right)^{\alpha/2}\left(\frac{2}{(1+x)\rho}\right)^{\beta/2}
J_\alpha\left(\sqrt{2(1-x)\rho}\right)I_\beta\left(\sqrt{2(1+x)\rho}\right)\nonumber\\[0.2cm]
&&\hspace{+6.2cm}=\sum_{n=0}^\infty\frac{1}{\Gamma(\alpha+1+n)\Gamma(\beta+1+n)}
\rho^nP_n^{(\alpha,\beta)}(x).
\label{jacbessel}
\end{eqnarray}
Using the connection relation for Jacobi polynomials (\ref{JacConnect}) in (\ref{jacbessel})
produces a double sum. Reversing the order of the summation and shifting the $n$-index by $k$
with simplification produces this generalization for a generating function of
Jacobi polynomials.
$\hfill\blacksquare$

\begin{defn}
A {\bf companion identity} is one which is produced by applying the
map $x\mapsto-x$ to an expansion over
Jacobi polynomials or in terms of those
orthogonal polynomials which can be obtained as
limiting cases of Jacobi
polynomials (i.e., Gegenbauer, Chebyshev, and Legendre polynomials)
with argument $x$
in conjunction with the parity relations for those orthogonal polynomials.
\end{defn}

By starting with (\ref{JacGenGen}) and 
(\ref{Jac2F3}), applying the parity relation
for Jacobi polynomials 
(see for instance Olver {\it et al.}~(2010) \cite[Table 18.6.1]{NIST}) 
\[
P_n^{(\alpha,\beta)}(-x)=(-1)^n P_n^{(\beta,\alpha)}(x),
\]
and mapping $\rho\mapsto-\rho$,
one obtains the corresponding companion identities.
Although for (\ref{Jac2F3}), one must substitute $-1=e^{\pm i\pi}$, 
and use Olver {\it et al.}~(2010) \cite[(10.27.6)]{NIST}.
Therefore Theorems \ref{JacGeneralGen} and \ref{ThmJac2F3}
are valid when the left-hand sides remain
the same, and on the right-hand sides $\alpha,\beta\mapsto\beta,\alpha$,
the arguments of the ${}_3F_2$ and ${}_2F_3$ are replaced by $-\rho$, and 
the order of the Jacobi polynomials become $(\alpha,\gamma)$.  

\begin{thm}
Let $\alpha\in\C$, $\beta,\gamma>-1$ such that if $\beta,\gamma\in(-1,0)$ then $\beta+\gamma+1\neq 0$,
$\rho\in\{z\in\C:|z|<1\}\setminus(-1,0]$, $x\in[-1,1]$. Then
\begin{eqnarray}
&&\hspace{-0.9cm}\frac{(1+x)^{-\beta/2}}
{\RR^{\alpha+1}}
P_\alpha^{-\beta}\left(\frac{1+\rho}{\RR}\right)
=\frac{\Gamma(\gamma+\beta+1)}
{2^{\beta/2}\Gamma(\beta+1)(1-\rho)^{\alpha-\gamma}\rho^{(\gamma+1)/2}}\nonumber\\[0.2cm]
&&\hspace{+0.5cm}\times\sum_{k=0}^\infty\frac{(2k+\gamma+\beta+1)(\gamma+\beta+1)_k\,(\alpha+\beta+1)_{2k}}{(\beta+1)_k}
P_{\gamma-\alpha}^{-\gamma-\beta-2k-1}\left(\frac{1+\rho}{1-\rho}\right)
P_k^{(\gamma,\beta)}(x).
\label{Jacwithalpha}
\end{eqnarray}
\label{ultrajac}
\end{thm}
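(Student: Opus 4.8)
\noindent The plan is to follow the route of the proofs of Theorems~\ref{JacGeneralGen} and~\ref{ThmJac2F3}, now starting from the generating function for Jacobi polynomials of Olver {\it et al.}~(2010) \cite[(18.12.3)]{NIST}, which may be written (using the hypergeometric representation of the associated Legendre function, Olver {\it et al.}~(2010) \cite[Section~14.3]{NIST}) in the form
\[
\frac{(1+x)^{-\beta/2}}{\RR^{\alpha+1}}\,P_\alpha^{-\beta}\left(\frac{1+\rho}{\RR}\right)
=\left(\frac{\rho}{2}\right)^{\beta/2}\frac{1}{\Gamma(\beta+1)}\sum_{n=0}^\infty\frac{(\alpha+\beta+1)_n}{(\beta+1)_n}\,\rho^n\,P_n^{(\alpha,\beta)}(x).
\]
(As a sanity check, setting $\gamma=\alpha$ in (\ref{Jacwithalpha}) and using $P_0^{-\mu}(z)=\Gamma(\mu+1)^{-1}((z-1)/(z+1))^{\mu/2}$ at $z=(1+\rho)/(1-\rho)$ collapses the right-hand side of (\ref{Jacwithalpha}) to precisely this series.) The first step is to insert the Jacobi connection relation (\ref{JacConnect}) into the right-hand side, producing a double sum. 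Reversing the order of summation is justified exactly as in Theorem~\ref{JacGeneralGen}: the coefficients are an $n$-independent constant times $(\alpha+\beta+1)_n\rho^n/(\beta+1)_n$, which for $\alpha,\beta,\gamma>-1$ grows at most polynomially in $n$ times $|\rho|^n$, so the connection-coefficient estimate $|a_{nk}|\le K(1+k)^{\sigma+1}(1+n)^\sigma$ proved there, combined with the Szeg\H{o} bound (\ref{eq2}), again gives $\sum_{n=0}^\infty|c_n|\sum_{k=0}^n|a_{nk}|\,|P_k^{(\gamma,\beta)}(x)|<\infty$. One then reverses the order and shifts the $n$-index by $k$.

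After this shift, the inner sum over $n$ collapses to a Gauss hypergeometric series in $\rho$:
\[
\sum_{n=0}^\infty\frac{(\alpha+\beta+1)_{n+k}}{(\beta+1)_{n+k}}\,a_{n+k,k}\,\rho^{n+k}
=\frac{(\alpha+\beta+1)_{2k}}{(\beta+1)_k\,(\gamma+\beta+k+1)_k}\,\rho^k\,
{}_2F_1\left(\begin{array}{c}\alpha+\beta+2k+1,\alpha-\gamma\\[0.1cm]\gamma+\beta+2k+2\end{array};\rho\right),
\]
where the Pochhammer bookkeeping rests on $(a)_{n+k}(a+n+k)_k=(a)_{2k}(a+2k)_n$ (applied with $a=\alpha+\beta+1$ and with $a=\gamma+\beta+2$) together with $(2k+\gamma+\beta+1)(\gamma+\beta+1)_k=(\gamma+\beta+1)(\gamma+\beta+2)_{2k}/(\gamma+\beta+k+1)_k$. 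This ${}_2F_1$ converges for $|\rho|<1$.

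Finally one must recognize this ${}_2F_1$ as an associated Legendre function. Take the hypergeometric representation of $P_\nu^{-\mu}(z)$ for $z>1$ (Olver {\it et al.}~(2010) \cite[Section~14.3]{NIST}) with $\nu=\gamma-\alpha$, $\mu=\gamma+\beta+2k+1$ and $z=(1+\rho)/(1-\rho)$, for which $(z-1)/(z+1)=\rho$ and $(1-z)/2=-\rho/(1-\rho)$, and then apply the Pfaff transformation \cite[(15.8.1)]{NIST}, ${}_2F_1(a,b;c;w)=(1-w)^{-b}\,{}_2F_1(c-a,b;c;w/(w-1))$, with $w=-\rho/(1-\rho)$ (so that $w/(w-1)=\rho$, $1-w=(1-\rho)^{-1}$ and $c-a=\alpha+\beta+2k+1$). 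This yields
\[
{}_2F_1\left(\begin{array}{c}\alpha+\beta+2k+1,\alpha-\gamma\\[0.1cm]\gamma+\beta+2k+2\end{array};\rho\right)
=\frac{\Gamma(\gamma+\beta+2k+2)}{\rho^{(\gamma+\beta+2k+1)/2}(1-\rho)^{\alpha-\gamma}}\,
P_{\gamma-\alpha}^{-\gamma-\beta-2k-1}\left(\frac{1+\rho}{1-\rho}\right).
\]
Substituting this into the reversed series, the powers of $\rho$ coalesce into the $k$-independent factor $(\rho/2)^{\beta/2}\rho^{k}\rho^{-(\gamma+\beta+2k+1)/2}=2^{-\beta/2}\rho^{-(\gamma+1)/2}$, while $\Gamma(\gamma+\beta+2k+2)/(\gamma+\beta+k+1)_k=(2k+\gamma+\beta+1)\,\Gamma(\gamma+\beta+1)\,(\gamma+\beta+1)_k$; collecting the surviving factors gives (\ref{Jacwithalpha}). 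As in Theorem~\ref{JacGeneralGen}, the hypothesis $\alpha,\beta,\gamma>-1$ used for the interchange is relaxed to $\alpha\in\C$ by analytic continuation in $\alpha$, and deleting $(-1,0]$ from the $\rho$-range is precisely what is needed for $\rho^{(\gamma+1)/2}$ to make sense and for both $(1+\rho)/\RR$ and $(1+\rho)/(1-\rho)$ to stay in $\C\setminus(-\infty,1]$ (for $\rho\in(0,1)$ both lie in $[1,\infty)$), where the intended branch of the Legendre function of the first kind is defined.

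The step I expect to be the main obstacle is this last one: one has to choose exactly the right pairing of the \cite[Section~14.3]{NIST} representation with a \cite[(15.8.1)]{NIST} transformation so that \emph{both} the hypergeometric variable becomes $\rho$ \emph{and} the prefactor $((z-1)/(z+1))^{\mu/2}$ turns into the clean power $\rho^{(\gamma+\beta+2k+1)/2}$; an Euler transformation, or the other Pfaff transformation, leaves a spurious $k$-dependent parameter in the ${}_2F_1$ and then the powers of $\rho$ fail to cancel. Everything else is the absolute-convergence argument transcribed from Theorem~\ref{JacGeneralGen} together with routine manipulation of Pochhammer symbols and of the powers of $\rho$ and $1-\rho$.
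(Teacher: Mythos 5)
Your proposal is correct and follows essentially the same route as the paper: insert the connection relation (\ref{JacConnect}) into the generating function (\ref{DLMF3}), interchange and shift the summation to obtain the ${}_2F_1$ coefficient of (\ref{DLMF3afterconnection}), and convert both that coefficient and the left-hand side into associated Legendre functions of the first kind. The only cosmetic differences are that you rewrite the left-hand side in Legendre form at the outset (the paper does this last, via the Magnus {\it et al.} entry) and that you rederive the paper's cited identity \cite[(15.9.19)]{NIST} from (\ref{LegP}) together with a Pfaff transformation, while also supplying the interchange justification that the paper leaves to the reader for this theorem.
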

\noindent {\bf Proof.} Olver {\it et al.}~(2010) \cite[(18.12.3)]{NIST} give a 
generating function for Jacobi polynomials, namely
\begin{equation}
(1+\rho)^{-\alpha-\beta-1}\,{_2}F_1\left(\begin{array}{c}
\frac12(\alpha+\beta+1),\frac12(\alpha+\beta+2)\\[0.2cm]
\beta+1\end{array};\frac{2(1+x)\rho}{(1+\rho)^2}\right)
=\sum_{n=0}^\infty\frac{(\alpha+\beta+1)_n}{(\beta+1)_n}\rho^nP_n^{(\alpha,\beta)}(x).
\label{DLMF3}
\end{equation}
Using the connection relation for Jacobi polynomials (\ref{JacConnect}) in (\ref{DLMF3})
produces a double sum on the right-hand side of the equation. Reversing the order of the summation
and shifting the $n$-index by $k$ with simplification gives a Gauss hypergeometric function as the coefficient
of the expansion. The resulting expansion formula is 
\begin{eqnarray}
&&\hspace{-1.1cm}(1+\rho)^{-\alpha-\beta-1}\,{_2}F_1\left(\begin{array}{c}
\frac12(\alpha+\beta+1),\frac12(\alpha+\beta+2)\\[0.2cm]
\beta+1\end{array};\frac{2(1+x)\rho}{(1+\rho)^2}\right)\nonumber\\[0.2cm]
&&\hspace{+0.5cm}=\frac{1}{\gamma+\beta+1}\sum_{k=0}^\infty\frac{(2k+\gamma+\beta+1)(\gamma+\beta+1)_k\,
\left(\frac{\alpha+\beta+1}{2}\right)_k\,\left(\frac{\alpha+\beta+2}{2}\right)_k}
{(\beta+1)_k\,\left(\frac{\gamma+\beta+2}{2}\right)_k\,\left(\frac{\gamma+\beta+3}{2}\right)_k}\nonumber\\[0.2cm]
&&\hspace{+5.0cm}\times\,{_2}F_1\left(\begin{array}{c}
\alpha+\beta+1+2k,\alpha-\gamma\\[0.2cm]
\gamma+\beta+2+2k\end{array};\rho\right)\rho^kP_k^{(\gamma,\beta)}(x).
\label{DLMF3afterconnection}
\end{eqnarray}
The Gauss hypergeometric function coefficient is realized to be an associated Legendre function of the first kind.
The associated Legendre function of the first kind $P_\nu^\mu :\C\setminus(-\infty,1]\to\C$
(see Chapter 14 in Olver {\it et al.}~(2010) \cite{NIST}) can be defined in terms of the
Gauss hypergeometric function as follows (Olver {\it et al.}~(2010) \cite[(14.3.6),(15.2.2), \S 14.21(i)]{NIST})
\begin{equation}
P_\nu^\mu(z):=\frac{1}{\Gamma(1-\mu)}\left(\frac{z+1}{z-1}\right)^{\mu/2}\,{_2}F_1\left(\begin{array}{c}
-\nu,\nu+1\\[0.1cm]1-\mu\end{array};\frac{1-z}{2}\right),
\label{LegP}
\end{equation}
for $z\in\C\setminus(-\infty,1]$.
Using a relation for the Gauss hypergeometric function from Olver {\it et al.}~(2010) \cite[(15.9.19)]{NIST}, namely
\begin{equation}
{_2}F_1\left(\begin{array}{c}a,b\\[0.2cm]a-b+1\end{array};z\right)
=\frac{z^{(b-a)/2}\,\Gamma(a-b+1)}{(1-z)^b}P_{-b}^{b-a}\left(\frac{1+z}{1-z}\right),
\label{simple2F1relation}
\end{equation}
for $z\in\C\setminus\left\{(-\infty,0]\cup(1,\infty)\right\}$,
the Gauss hypergeometric function coefficient of the expansion can be 
expressed as an
associated Legendre function of the first kind.
The Gauss hypergeometric function on the left-hand side of (\ref{DLMF3afterconnection}) can also be 
expressed in terms of the associated Legendre function of the first kind using
Magnus {\it et al.}~(1966) \cite[p.~157, entry 11]{MOS}, namely
\[
P_\nu^\mu(z)=\frac{2^\mu z^{\nu+\mu}}{\Gamma(1-\mu)(z^2-1)^{\mu/2}}\,{_2}F_1\left(\begin{array}{c}
\frac{-\nu-\mu}{2},\frac{-\nu-\mu+1}{2}\\[0.2cm]1-\mu\end{array};1-\frac{1}{z^2}\right),
\]
where $\Re z>0$. This completes the proof.
$\hfill\blacksquare$

\begin{cor}
Let $\beta\in\C$, $\alpha,\gamma>-1$ such that if $\alpha,\gamma\in(-1,0)$ then $\alpha+\gamma+1\neq 0$,
$\rho\in(0,1)$, $x\in[-1,1]$. Then
\begin{eqnarray}
&&\hspace{-0.9cm}\frac{(1-x)^{-\alpha/2}}
{\RR^{\beta+1}}
{\mathrm P}_\beta^{-\alpha}\left(\frac{1-\rho}{\RR}\right)
=\frac{\Gamma(\gamma+\alpha+1)}
{2^{\alpha/2}\Gamma(\alpha+1)(1+\rho)^{\beta-\gamma}\rho^{(\gamma+1)/2}}\nonumber\\[0.2cm]
&&\hspace{+0.5cm}\times\sum_{k=0}^\infty
\frac{(2k+\gamma+\alpha+1)(\gamma+\alpha+1)_k\,(\alpha+\beta+1)_{2k}}{(\alpha+1)_k}
{\mathrm P}_{\gamma-\beta}^{-\gamma-\alpha-2k-1}\left(\frac{1-\rho}{1+\rho}\right)
P_k^{(\alpha,\gamma)}(x).
\label{Jacwithalphacom}
\end{eqnarray}
\label{othercooljac}
\end{cor}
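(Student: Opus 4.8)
The plan is to derive (\ref{Jacwithalphacom}) as the companion identity of Theorem \ref{ultrajac}, in the sense of the Definition preceding this corollary. Starting from (\ref{Jacwithalpha}), I would first apply the substitution $x\mapsto-x$: on the left-hand side this replaces $(1+x)^{-\beta/2}$ by $(1-x)^{-\beta/2}$ and $\RR=\sqrt{1+\rho^2-2\rho x}$ by $\sqrt{1+\rho^2+2\rho x}$, while on the right-hand side the only $x$-dependence is the factor $P_k^{(\gamma,\beta)}(x)$, which the Jacobi parity relation $P_k^{(\gamma,\beta)}(-x)=(-1)^kP_k^{(\beta,\gamma)}(x)$ turns into $(-1)^kP_k^{(\beta,\gamma)}(x)$. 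Next I would apply $\rho\mapsto-\rho$, which restores $\RR$ on the left-hand side and changes the Legendre argument there from $\frac{1+\rho}{\RR}$ to $\frac{1-\rho}{\RR}$, while on the right-hand side it sends $(1-\rho)^{\alpha-\gamma}\mapsto(1+\rho)^{\alpha-\gamma}$, $\rho^{(\gamma+1)/2}\mapsto(-\rho)^{(\gamma+1)/2}$, and the argument of $P_{\gamma-\alpha}^{-\gamma-\beta-2k-1}$ from $\frac{1+\rho}{1-\rho}$ to $\frac{1-\rho}{1+\rho}$. Finally I would interchange the labels $\alpha$ and $\beta$; this is legitimate because the hypotheses of Theorem \ref{ultrajac} are symmetric under that swap and become exactly the hypotheses stated for the corollary, and it turns the left-hand side into $(1-x)^{-\alpha/2}\RR^{-\beta-1}\,\mathrm P_\beta^{-\alpha}\!\left(\frac{1-\rho}{\RR}\right)$ and the Jacobi polynomials on the right into $P_k^{(\alpha,\gamma)}(x)$. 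Since (\ref{Jacwithalpha}) is already established and we are only changing variables and renaming parameters, no new interchange of summations or convergence argument is required.

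The step I expect to be the main obstacle is the bookkeeping of the sign and branch factors, so that the outcome has the clean form asserted --- no residual $(-1)^k$, and Ferrers functions $\mathrm P_\nu^\mu$ in place of the associated Legendre functions $P_\nu^\mu$ of Theorem \ref{ultrajac}. For $\rho\in(0,1)$ and $x\in[-1,1]$ one checks that $\frac{1-\rho}{1+\rho}\in(0,1)$ and $\frac{1-\rho}{\RR}\in(0,1]$, so both arguments now lie on the branch cut $(-\infty,1]$ of $P_\nu^\mu$; this is precisely why the corollary must be phrased with the Ferrers function and cannot be read off from Theorem \ref{ultrajac} by a bare substitution of symbols.

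To keep this transparent I would perform the substitutions not on the final form (\ref{Jacwithalpha}) but on the intermediate expansion (\ref{DLMF3afterconnection}), whose coefficient is still a Gauss hypergeometric function ${}_2F_1(\,\cdot\,;\rho)$ times an explicit factor $\rho^k$. The $(-1)^k$ from the parity relation then cancels the $(-1)^k$ from $\rho^k\mapsto(-\rho)^k$, leaving the coefficient $\rho^k\,{}_2F_1(\,\cdot\,;-\rho)$; re-applying relation (\ref{simple2F1relation}) with $z=-\rho$ (so that $\frac{1+z}{1-z}=\frac{1-\rho}{1+\rho}$), followed by the standard relation between $P_\nu^\mu$ and $\mathrm P_\nu^\mu$ on $(-1,1)$ (see \S 14.23 in Olver {\it et al.}~(2010) \cite{NIST}), produces the Ferrers function $\mathrm P_{\gamma-\beta}^{-\gamma-\alpha-2k-1}\!\left(\frac{1-\rho}{1+\rho}\right)$ after the $\alpha\leftrightarrow\beta$ relabelling. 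Applying the same manipulation to the left-hand ${}_2F_1$ of (\ref{DLMF3afterconnection}) --- through (\ref{LegP}), an Euler transformation, and the Magnus {\it et al.}~(1966) quadratic-type formula quoted in the proof of Theorem \ref{ultrajac} --- gives $(1-x)^{-\alpha/2}\RR^{-\beta-1}\mathrm P_\beta^{-\alpha}\!\left(\frac{1-\rho}{\RR}\right)$, and collecting the remaining elementary factors $2^{\alpha/2}$, $(1+\rho)^{\beta-\gamma}$, $\rho^{(\gamma+1)/2}$, the ratio $\Gamma(\gamma+\alpha+1)/\Gamma(\alpha+1)$, and the Pochhammer symbols then yields (\ref{Jacwithalphacom}).
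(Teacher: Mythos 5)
Your proposal matches the paper's proof: the corollary is obtained as the companion identity of the intermediate expansion (\ref{DLMF3afterconnection}) --- parity relation plus $\rho\mapsto-\rho$ (with the $(-1)^k$ cancellation and the $\alpha\leftrightarrow\beta$ relabelling) --- after which both the coefficient ${}_2F_1(\,\cdot\,;-\rho)$ and the left-hand ${}_2F_1$ are identified as Ferrers functions of the first kind. The only real difference is in that identification step: the paper derives the Ferrers analogue of (\ref{simple2F1relation}) directly from the definition (\ref{ferrers}) and the linear transformation \cite[(15.8.1)]{NIST} (and uses the Magnus {\it et al.} formula for the left-hand side), whereas you propose reusing (\ref{simple2F1relation}) at $z=-\rho$, which lies outside its stated domain, and then passing to the Ferrers function via \S 14.23 --- a workable route, but it requires the boundary-value argument you yourself flag, which the paper's direct derivation avoids.
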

\noindent {\bf Proof.}  
Applying the parity relation for Jacobi polynomials to (\ref{DLMF3afterconnection}) and mapping $\rho\mapsto-\rho$
produces its companion identity.  The Gauss hypergeometric functions
appearing in this expression are Ferrers functions of the first kind 
(often referred to as the associated Legendre function of the first kind on the cut).
The Ferrers function of the first kind
${\mathrm P}_\nu^\mu:(-1,1)\to\C$. 
can be defined in terms of the 
Gauss hypergeometric function as 
follows (Olver {\it et al.}~(2010) \cite[(14.3.1)]{NIST})
\begin{equation}
{\mathrm P}_\nu^\mu(x):=\frac{1}{\Gamma(1-\mu)}\left(\frac{1+x}{1-x}\right)^{\mu/2}\,{_2}F_1\left(\begin{array}{c}
-\nu,\nu+1\\[0.1cm]1-\mu\end{array};\frac{1-x}{2}\right),
\label{ferrers}
\end{equation}
for $x\in(-1,1)$.
The Gauss hypergeometric function coefficient of the expansion is
seen to be a Ferrers function of the first kind by starting 
with (\ref{ferrers}) and using the linear transformation for the
Gauss hypergeometric function
\cite[(15.8.1)]{NIST}. This derives a Gauss hypergeometric
function representation of the Ferrers function of the first kind, namely
\[
{_2}F_1\left(\begin{array}{c}a,b\\[0.2cm]a-b+1\end{array};-x\right)
=\frac{x^{(b-a)/2}\,\Gamma(a-b+1)}{(1+x)^b}{\mathrm P}_{-b}^{b-a}\left(\frac{1-x}{1+x}\right),
\]
for $x\in(0,1)$.  
The Gauss hypergeometric function on the
left-hand side of the companion identity for (\ref{DLMF3afterconnection})
is shown to be a Ferrers function of the first kind through 
Magnus {\it et al.}~(1996) \cite[p.~167]{MOS}, namely
\[
{\mathrm P}_\nu^\mu(x)=\frac{2^\mu x^{\nu+\mu}}{\Gamma(1-\mu)(1-x^2)^{\mu/2}}\,{_2}F_1\left(\begin{array}{c}
\frac{-\nu-\mu}{2},\frac{-\nu-\mu+1}{2}\\[0.2cm]1-\mu\end{array};1-\frac{1}{x^2}\right),
\]
for $x\in(0,1)$. This completes the proof.
$\hfill\blacksquare$

\medskip

The above two theorems are interesting results, as they are general cases for other 
generalizations found from related generating functions. For instance, by applying the connection 
relation for Jacobi polynomials (\ref{JacConnect}) to an important extension of the generating 
function (\ref{DLMF3}) (with the parity relation for Jacobi polynomials applied) given by 
Ismail (2005) \cite[(4.3.2)]{Ismail}, namely 
\begin{eqnarray}
&&\hspace{-1.0cm}\frac{1+\rho}{(1-\rho)^{\alpha+\beta+2}}\,{_2}F_1\left(\begin{array}{c}
\frac{\alpha+\beta+2}{2},\frac{\alpha+\beta+3}{2}\\[0.2cm]
\alpha+1\end{array};\frac{2\rho(x-1)}{(1-\rho)^2}\right)\nonumber\\[0.2cm]
&&\hspace{+5.0cm}=\sum_{n=0}^\infty\frac{(\alpha+\beta+1+2n)(\alpha+\beta+1)_n}{(\alpha+\beta+1)(\alpha+1)_n}
\rho^nP_n^{(\alpha,\beta)}(x),
\label{ismails}
\end{eqnarray}
produces a generalization that is equivalent to mapping $\alpha\mapsto\alpha+1$ in Theorem \ref{ultrajac}.

It is interesting that trying to generalize 
(\ref{ismails}) using the connection relation (\ref{JacConnect}), does not produce a new generalized
formula, since (\ref{Jacwithalpha}) is its generalization --
Ismail proves (\ref{ismails}) by multiplying (\ref{DLMF3})
by $\rho^{(\alpha+\beta+1)/2}$, after the companion identity is applied, 
and then differentiating by $\rho$.
Ismail also mentions that (\ref{ismails}) (and therefore
Theorem \ref{ultrajac} and 
Corollary \ref{othercooljac})
are closely connected to the Poisson 
kernel of $\bigl\{P_n^{(\alpha,\beta)}(x)\bigr\}$.
One can also see that these expansions are related to 
the translation operator associated with Jacobi polynomials
by mapping $\alpha\mapsto\alpha+1$ in Theorem \ref{ultrajac}.

Theorem \ref{ultrajac} and 
Corollary \ref{othercooljac}
are also generalizations of the expansion
(see Cohl \& MacKenzie (2013) \cite{CohlMacK12})
\begin{eqnarray}
&&\hspace{-1.0cm}\frac{(1+x)^{-\beta/2}}
{\RR^{\alpha+m+1}}
P_{\alpha+m}^{-\beta}\left(\frac{1+\rho}{\RR}\right)\nonumber\\[0.2cm]
&&\hspace{+1.0cm}=\frac{\rho^{-(\alpha+1)/2}}{2^{\beta/2}(1-\rho)^m}\sum_{n=0}^\infty
\frac{(2n+\alpha+\beta+1)\Gamma(\alpha+\beta+n+1)(\alpha+\beta+m+1)_{2n}}{\Gamma(\beta+n+1)}\nonumber\\[0.2cm]
&&\hspace{+8.0cm}\times P_{-m}^{-\alpha-\beta-2n-1}\left(\frac{1+\rho}{1-\rho}\right)P_n^{(\alpha,\beta)}(x),
\label{withm}
\end{eqnarray}
found by mapping $\alpha,\gamma\mapsto\alpha+m,\alpha$ for $m\in\N_0$ in Theorem \ref{ultrajac};
and its companion identity (see Cohl \& MacKenzie (2013) \cite{CohlMacK12}),
\begin{eqnarray}
&&\hspace{-1.0cm}\frac{(1-x)^{-\alpha/2}}
{\RR^{\beta+m+1}}
{\mathrm P}_{\beta+m}^{-\alpha}\left(\frac{1-\rho}{\RR}\right)\nonumber\\[0.2cm]
&&\hspace{+1.0cm}=\frac{\rho^{-(\beta+1)/2}}{2^{\alpha/2}(1+\rho)^m}\sum_{n=0}^\infty
\frac{(2n+\alpha+\beta+1)\Gamma(\alpha+\beta+n+1)(\alpha+\beta+m+1)_{2n}}{\Gamma(\alpha+n+1)}\nonumber\\[0.2cm]
&&\hspace{+8.0cm}\times {\mathrm P}_{-m}^{-\alpha-\beta-2n-1}\left(\frac{1-\rho}{1+\rho}\right)
P_n^{(\alpha,\beta)}(x),
\label{withmcom}
\end{eqnarray}
found by mapping $\beta,\gamma\mapsto\beta+m,\beta$ for $m\in\N_0$ in Theorem \ref{othercooljac}.
The expansions (\ref{withm}), (\ref{withmcom}) are produced 
using the definition of 
the Gauss hypergeometric function on the left hand side of (\ref{ismails}) and the expansion of
$(1-x)^n$ in terms of Jacobi polynomials (see Cohl \& MacKenzie (2013) \cite[(7), (13)]{CohlMacK12}). 
Interestingly, the expansions (\ref{withm}) and (\ref{withmcom}) are also related to the
generalized translation operator, but with a more general translation that can be seen with the
$\alpha\mapsto\alpha+m$ or $\beta\mapsto\beta+m$.

\section{Expansions in Gegenbauer polynomials}
\label{Gegenbauerpolynomials}

The Gegenbauer polynomials $C_n^{\mu}:\C\to\C$ can be defined 
in terms of the terminating Gauss hypergeometric series as follows
(Olver {\it et al.}~(2010) \cite[(18.5.9)]{NIST})
\[
C_n^{\mu}(z):=\frac{(2\mu)_n}{n!}\,{}_2F_1\left(
\begin{array}{c}
-n,n+2\mu\\[0.1cm]
\mu+\frac12
\end{array};
\frac{1-z}{2}
\right),
\]
for $n\in\N_0$ and 
$\mu\in(-1/2,\infty)\setminus\{0\}$.
The orthogonality relation for Gegenbauer polynomials can be found in 
Olver {\it et al.}~(2010) \cite[(18.2.1), (18.2.5), Table 18.3.1]{NIST}
for $m,n\in\N_0$,
namely
\begin{equation}
\int_{-1}^1
C_m^{\mu}(x)
C_n^{\mu}(x)
(1-x^2)^{\mu-1/2}
dx
=
\frac{\pi 2^{1-2\mu}\Gamma(2\mu+n)}{(n+\mu)\Gamma^2(\mu)n!}
\delta_{m,n}.
\label{OrthoGegen}
\end{equation}

\begin{thm} Let $\lambda,\mu\in\C$, $\nu\in(-1/2,\infty)\setminus\{0\},$
$\rho\in\left\{z\in\C:|z|<1\right\},$ $x\in[-1,1]$. Then
\begin{eqnarray}
&&\hspace{-0.5cm}(1-x^2)^{1/4-\mu/2}
P_{\lambda+\mu-1/2}^{1/2-\mu}\left(\RR+\rho\right)\,
{\mathrm P}_{\lambda+\mu-1/2}^{1/2-\mu}\left(\RR-\rho\right)\nonumber\\[0.2cm]
&&\hspace{+0.5cm}=\frac{(\rho/2)^{\mu-1/2}}{\nu\,\Gamma^2(\mu+1/2)}
\sum_{n=0}^\infty \frac{(\nu+n)(-\lambda)_n\,(2\mu+\lambda)_n\,(\mu)_n}
{(2\mu)_n\,(\mu+1/2)_n\,(\nu+1)_n}\nonumber\\[0.2cm]
&&\hspace{+1.5cm}\times\,{_6}F_5\left(
\begin{array}{c}
\frac{-\lambda+n}{2},\frac{-\lambda+n+1}{2},\frac{2\mu+\lambda+n}{2},
\frac{2\mu+\lambda+n+1}{2}
,\mu+n
,\mu-\nu
\\[0.2cm]
\frac{2\mu+n}{2},\frac{2\mu+n+1}{2},\frac{\mu+n+\frac12}{2},\frac{\mu+n+\frac32}{2},\nu+1+n
\end{array};\rho^2\right)\rho^{n}C_n^\nu(x).
\label{Gegenwith6F5}
\end{eqnarray}
\end{thm}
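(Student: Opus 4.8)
The plan is to reuse the series-rearrangement scheme of Theorems~\ref{JacGeneralGen}--\ref{ultrajac}, this time starting from the ultraspherical case of Brafman's generating function. The first step is to put that base generating function into the form
\[
(1-x^2)^{1/4-\mu/2}\,P_{\lambda+\mu-1/2}^{1/2-\mu}(\RR+\rho)\,{\mathrm P}_{\lambda+\mu-1/2}^{1/2-\mu}(\RR-\rho)
=\frac{(\rho/2)^{\mu-1/2}}{\Gamma^2(\mu+1/2)}\sum_{n=0}^\infty\frac{(-\lambda)_n\,(2\mu+\lambda)_n}{(2\mu)_n\,(\mu+1/2)_n}\,\rho^n C_n^\mu(x).
\]
Brafman's formula, specialized to equal Jacobi parameters $\alpha=\beta=\mu-1/2$ and with its free parameter set equal to $-\lambda$, already has the displayed coefficients on the right (after converting $P_n^{(\mu-1/2,\mu-1/2)}$ to $C_n^\mu$), and on the left it is a product of two Gauss hypergeometric functions of arguments $\frac{1-\rho-\RR}{2}$ and $\frac{1+\rho-\RR}{2}$; each of those ${}_2F_1$'s is a Gegenbauer function $C_\lambda^\mu$ evaluated at $\RR+\rho$ and at $\RR-\rho$ respectively, and the classical reduction of a Gegenbauer function to a Legendre function when the argument lies outside $[-1,1]$ (here $\RR+\rho$) and to a Ferrers function when it lies in $(-1,1)$ (here $\RR-\rho$) delivers the displayed left-hand side once the factors $(1-x^2)^{1/4-\mu/2}$ and $\rho^{\mu-1/2}$ are split off. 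That one factor must be a Legendre and the other a Ferrers function is forced by the identity $\bigl(1-(\RR+\rho)^2\bigr)\bigl(1-(\RR-\rho)^2\bigr)=-4\rho^2(1-x^2)$, so that $1-(\RR\pm\rho)^2$ carry opposite signs; the resulting $\pm i\pi$ phases are handled exactly as in the companion-identity discussion above, and the domain of $\rho$ is restricted (or the functions are read by analytic continuation) so that $\RR-\rho\in(-1,1)$.

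Next I would insert the Gegenbauer connection relation (\ref{GegenConnect}), rewritten so as to expand $C_n^\mu(x)$ in the $C_k^\nu(x)$ (i.e.\ with the roles of $\mu$ and $\nu$ interchanged there), into the single sum above, producing a double sum. Since the connection relation advances the summation index in steps of two, after reversing the order of summation and setting $n=k+2m$ the inner series carries the powers $\rho^{2m}$, which is precisely what yields the argument $\rho^2$ of the ${}_6F_5$ in (\ref{Gegenwith6F5}). The interchange is justified just as in the proof of Theorem~\ref{JacGeneralGen}: $\max_{x\in[-1,1]}|C_n^\nu(x)|$ is bounded by a polynomial in $n$, the orthogonality relation (\ref{OrthoGegen}) bounds from below the corresponding weighted $L^2$-norm of $C_k^\nu$ by a negative power of $1+k$, hence the connection coefficients are bounded by a polynomial in $n$ and $k$, and the resulting triple sum of absolute values converges because $|\rho|<1$.

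After the interchange the computation is mechanical: split each Pochhammer symbol $(a)_{k+2m}=4^m(a)_k\bigl(\frac{a+k}{2}\bigr)_m\bigl(\frac{a+k+1}{2}\bigr)_m$ and each $(a)_{k+m}=(a)_k(a+k)_m$; the powers of $4$ cancel between numerator and denominator; the Pochhammer factors depending only on $k$ combine with the prefactor $\frac{1}{\nu}(\nu+k)\rho^k$ to reproduce the coefficient shown in (\ref{Gegenwith6F5}); and the remaining sum over $m$ is visibly the ${}_6F_5$ of argument $\rho^2$ with the numerator and denominator parameters listed there. Re-indexing $k\to n$ and absorbing the constant $(\rho/2)^{\mu-1/2}/\Gamma^2(\mu+1/2)$ finishes the proof; a useful check is that setting $\nu=\mu$ makes $(\mu-\nu)_m=\delta_{m,0}$, so the ${}_6F_5$ collapses to $1$ and the whole right-hand side returns to the base generating function. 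I expect the only genuinely delicate point to be the first step --- writing the base generating function in the Legendre/Ferrers form with the branch of $(1-(\RR\pm\rho)^2)^{1/4-\mu/2}$ chosen consistently and the domain of $\rho$ handled correctly; everything after that parallels the Jacobi cases already treated and is routine.
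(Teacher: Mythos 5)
Your proposal is correct and follows essentially the same route as the paper: the base identity you call the ultraspherical case of Brafman's formula is exactly the generating function of Koekoek–Lesky–Swarttouw (9.8.32) (with $\lambda\mapsto-\lambda$), which the paper likewise rewrites as the Legendre-times-Ferrers form (\ref{genfngegafterPPtrans}) before inserting the Gegenbauer connection relation (\ref{GegenConnect}), reversing the order of summation, and shifting the index by $2k$ to obtain the ${}_6F_5$. The only differences are cosmetic — the paper cites Abramowitz--Stegun (15.4.19) and DLMF (14.3.6) with Euler's transformation for the ${}_2F_1$-to-Legendre/Ferrers step, and it omits the interchange justification that you sketch — so your argument matches the published proof.
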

\noindent {\bf Proof.}
In Koekoek, Lesky \& Swarttouw (2010) 
\cite[(9.8.32)]{Koekoeketal}, there is the following 
generating function for Gegenbauer polynomials
\begin{eqnarray}
{_2}F_1\left(\begin{array}{c}
\lambda,2\mu-\lambda\\[0.2cm]
\mu+\frac12\end{array};\frac{1-\rho-\RR}{2}\right)
\,{_2}F_1\left(\begin{array}{c}
\lambda,2\mu-\lambda\\[0.2cm]
\mu+\frac12\end{array};\frac{1+\rho-\RR}{2}\right)
=\sum_{n=0}^\infty \frac{(\lambda)_n\,(2\mu-\lambda)_n}
{(2\mu)_n\,(\mu+\frac12)_n} \rho^n C_n^\mu(x).\nonumber
\end{eqnarray}
These Gauss hypergeometric functions can be re-written in terms of 
associated Legendre and Ferrers functions of the first kind.
The first Gauss hypergeometric function can be written in terms
the Ferrers function of the first kind using Abramowitz \& 
Stegun (1972) \cite[(15.4.19)]{Abra}, namely
\[
{}_2F_1\left(
\begin{array}{c}
a,b\\[0.1cm]\frac{a+b+1}{2}
\end{array};x\right)=\Gamma
\left(\frac{a+b+1}{2}\right)\left(x(1-x)\right)^{(1-a-b)/4}
{\rm P}_{(a-b-1)/2}^{(1-a-b)/2}(1-2x),
\]
for $x\in(0,1),$ with $a=\lambda$, $b=2\mu-\lambda$.
The second Gauss hypergeometric function can
be written in terms of the associated Legendre function
of the first kind using \cite[(14.3.6)]{NIST} and Euler's 
transformation \cite[(15.8.1)]{NIST}.  The substitutions yield
\begin{eqnarray}
&&\hspace{-1cm}(1-x^2)^{1/4-\mu/2}
P_{\lambda+\mu-1/2}^{1/2-\mu}(\RR+\rho)
{\rm P}_{\lambda+\mu-1/2}^{1/2-\mu}(\RR-\rho)\nonumber\\[0.2cm]
&&\hspace{5cm}=\frac{(\rho/2)^{\mu-1/2}}{\Gamma^2(\mu+1/2)}
\sum_{n=0}^\infty 
\frac{(-\lambda)_n(2\mu+\lambda)_n}{(2\mu)_n(\mu+1/2)_n}
\rho^nC_n^\mu(x).
\label{genfngegafterPPtrans}
\end{eqnarray}

Using the connection relation for Gegenbauer 
polynomials (\ref{GegenConnect})
on the generating function (\ref{genfngegafterPPtrans})
produces a double sum. 
Reversing the order of the summation and shifting the $n$-index by $2k$
with simplification completes the proof.
$\hfill\blacksquare$

\medskip

Associated Legendre and Ferrers functions of the first kind
with special values of the degree and order reduce to 
Gegenbauer polynomials.  For instance, if 
$n\in\N_0$, then through
\cite[(14.3.22)]{NIST}
\[
P_{n+\mu-1/2}^{1/2-\mu}(z)=
\frac{2^{\mu-1/2}\Gamma(\mu)n!}
{\sqrt{\pi}\,\Gamma(2\mu+n)}
(z^2-1)^{\mu/2-1/4}\,C_n^\mu(z),
\]
and from \cite[(14.3.21)]{NIST}, one has
\[
{\mathrm P}_{n+\mu-1/2}^{1/2-\mu}(x)=
\frac{2^{\mu-1/2}\Gamma(\mu)n!}
{\sqrt{\pi}\,\Gamma(2\mu+n)}
(1-x^2)^{\mu/2-1/4}\,C_n^\mu(x).
\]
From (\ref{genfngegafterPPtrans}) using the above expressions,
we have the following finite-summation generating function
expression with $m\in\N_0$,
\begin{equation}
C_m^\mu(\RR+\rho)C_m^\mu(\RR-\rho)=\frac{(2\mu)_m^2}
{(m!)^2}\sum_{n=0}^m
\frac{(-m)_n(2\mu+m)_n}{(2\mu)_n(\mu+\frac12)_n}\rho^nC_n^\mu(x),
\label{prodgeggen}
\end{equation}
and from the generalized result
(\ref{Gegenwith6F5}) we have
\begin{eqnarray*}
&&\hspace{-0.5cm}
C_m^\mu(\RR+\rho)C_m^\mu(\RR-\rho)=\frac{(2\mu)_m^2}{\nu(m!)^2}
\sum_{n=0}^m \frac{(\nu+n)\,(-m)_n\,(2\mu+m)_n\,(\mu)_n}
{(2\mu)_n\,(\mu+1/2)_n\,(\nu+1)_n}\nonumber\\[0.2cm]
&&\hspace{+1.5cm}\times\,{_6}F_5\left(
\begin{array}{c}
\frac{-m+n}{2},\frac{-m+n+1}{2},\frac{2\mu+m+n}{2},\frac{2\mu+m+n+1}{2},\mu-\nu,\mu+n\\[0.2cm]
\frac{2\mu+n}{2},\frac{2\mu+n+1}{2},\frac{\mu+n+\frac12}{2},\frac{\mu+n+\frac32}{2},\nu+1+n
\end{array};\rho^2\right)\rho^{n}C_n^\nu(x),
\end{eqnarray*}
which reduces to 
(\ref{prodgeggen}) when $\nu=\mu$.

\medskip

Consider the generating function for Gegenbauer polynomials,
Olver {\it et al.}~(2010) \cite[(18.12.5)]{NIST} 
\begin{equation}
\frac{1-\rho x}{(1+\rho^2-2\rho x)^{\nu+1}}
=\frac{1}{2\nu}\sum_{n=0}^\infty (n+2\nu)\rho^n C_n^\nu(x),
\label{GegenGen5}
\end{equation}
and the generating function
\begin{equation}
\frac{x-\rho}{(1+\rho^2-2\rho x)^{\nu+1}}
=\frac{1}{2\nu\rho}
\sum_{n=0}^\infty n\rho^nC_n^\nu(x),
\label{changedGegenGen}
\end{equation}
which follows from (\ref{GegenGen5}) using (\ref{generatingfnforgeg}).
The technique of this paper can also be applied to generalize 
(\ref{GegenGen5}) and
(\ref{changedGegenGen}).
However, note that
\[
\frac{1-\rho x}{(1+\rho^2-2\rho x)^{\nu+1}}=\frac{1-\rho^2}{2} \frac{1}{(1+\rho^2-2\rho x)^{\nu+1}}+\frac12\frac{1}{(1+\rho^2-2\rho x)^\nu},
\]
\[
 \frac{x-\rho}{(1+\rho^2-2\rho x)^{\nu+1}}=\frac{1-\rho^2}{2\rho} \frac{1}{(1+\rho^2-2\rho x)^{\nu+1}}-\frac1{2\rho}\frac{1}{(1+\rho^2-2\rho x)^\nu},
\]
so it is easier to use 
(\ref{generatingfnforgeggen})
on the right-hand sides.  

\medskip

The Gegenbauer polynomials can be defined as a specific case of the Jacobi polynomials, namely
\begin{equation}
C_n^\nu(x)=\frac{(2\nu)_n}{\left(\nu+\frac12\right)_n}P_n^{(\nu-1/2,\nu-1/2)}(x).
\label{JactoGeg}
\end{equation}
Therefore the expansions given in the section on Jacobi polynomials can 
also be written as expansions in Gegenbauer polynomials by using symmetric 
parameters.
Furthermore, these expansions can also be written as expansions over 
Chebyshev polynomials of the second kind and Legendre polynomials using
$U_n(z)=C_n^1(z),$
$P_n(z)=C_n^{1/2}(z),$
for $n\in\N_0$.
One may also take the limit of an expansion in Gegenbauer 
polynomials as $\mu\to0.$ This limit may be 
well defined with the interpretation of obtaining 
Chebyshev polynomials of the first 
kind through
Andrews {\it et al.}~(1999) \cite[(6.4.13)]{AAR}, namely
\begin{equation}
T_n(z)=\frac{1}{\epsilon_n}\lim_{\mu\to 0}\frac{n+\mu}{\mu}C_n^\mu(z),
\label{defnChebyshev1intermsofGegenbauer}
\end{equation}
where the Neumann factor $\epsilon_n\in\{1,2\},$ defined by 
$\epsilon_n:=2-\delta_{n,0},$ commonly seen in Fourier 
cosine series.
We can, for example, derive the following corollaries.

\begin{cor}
Let 
$\alpha\in\C$, $\gamma\in(-1/2,\infty)\setminus\{0\}$,
$\rho\in\{z\in\C:|z|<1\}$, $x\in[-1,1]$. Then
\begin{eqnarray}
&&\hspace{-1.0cm}\frac{2^{\alpha+\gamma-1}}
{\RR\left(1+\RR-\rho\right)^{\alpha-1/2}\left(1+\RR+\rho\right)^{\gamma-1/2}}\nonumber\\[0.2cm]
&&\hspace{-0.4cm}=\frac{1}{\gamma}\sum_{k=0}^\infty
\frac{(k+\gamma)\left(\frac{\alpha+\gamma}{2}\right)_k\,\left(\frac{\alpha+\gamma+1}{2}\right)_k}
{(\alpha+\gamma)_k\,(\gamma+1)_k}\,
{_3}F_2\left(\begin{array}{c}
\gamma+k+\frac12,\alpha+\gamma+2k,\alpha-\gamma\\[0.1cm]
\alpha+\gamma+k,2\gamma+2k+1\end{array};\rho\right)\rho^k C_k^\gamma(x).
\label{GegenFromJacGen}
\end{eqnarray}
\end{cor}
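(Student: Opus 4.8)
The plan is to obtain (\ref{GegenFromJacGen}) as the symmetric-parameter specialization of Theorem~\ref{JacGeneralGen}, using (\ref{JactoGeg}) to convert the Jacobi polynomials $P_k^{(\gamma,\beta)}(x)$ on the right-hand side of (\ref{JacGenGen}) into Gegenbauer polynomials. To avoid a clash of names, write $(\alpha_\ast,\beta_\ast,\gamma_\ast)$ for the parameters appearing in Theorem~\ref{JacGeneralGen}. First I would set $\alpha_\ast=\alpha-\frac12$, $\beta_\ast=\gamma-\frac12$ and $\gamma_\ast=\gamma-\frac12$. With this choice $\alpha_\ast+\beta_\ast=\alpha+\gamma-1$, so the left-hand side $2^{\alpha_\ast+\beta_\ast}/(\RR\,(1+\RR-\rho)^{\alpha_\ast}(1+\RR+\rho)^{\beta_\ast})$ of (\ref{JacGenGen}) becomes exactly the left-hand side of (\ref{GegenFromJacGen}), and the polynomials in the expansion become $P_k^{(\gamma-1/2,\gamma-1/2)}(x)$.

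Second I would simplify the coefficients, which is purely routine. Under the substitution $\gamma_\ast+\beta_\ast+1=2\gamma$, so the prefactor $1/(\gamma_\ast+\beta_\ast+1)$ becomes $1/(2\gamma)$ and $2k+\gamma_\ast+\beta_\ast+1$ becomes $2(k+\gamma)$; the Pochhammer factors reduce to $(\gamma_\ast+\beta_\ast+1)_k=(2\gamma)_k$, $(\frac{\alpha_\ast+\beta_\ast+1}{2})_k=(\frac{\alpha+\gamma}{2})_k$, $(\frac{\alpha_\ast+\beta_\ast+2}{2})_k=(\frac{\alpha+\gamma+1}{2})_k$, $(\alpha_\ast+\beta_\ast+1)_k=(\alpha+\gamma)_k$, $(\frac{\gamma_\ast+\beta_\ast+2}{2})_k=(\gamma+\frac12)_k$, and $(\frac{\gamma_\ast+\beta_\ast+3}{2})_k=(\gamma+1)_k$; and the numerator and denominator parameters of the ${}_3F_2$, namely $\beta_\ast+k+1,\;\alpha_\ast+\beta_\ast+2k+1,\;\alpha_\ast-\gamma_\ast$ and $\alpha_\ast+\beta_\ast+k+1,\;\gamma_\ast+\beta_\ast+2k+2$, become $\gamma+k+\frac12,\;\alpha+\gamma+2k,\;\alpha-\gamma$ and $\alpha+\gamma+k,\;2\gamma+2k+1$, which is precisely the ${}_3F_2$ written in (\ref{GegenFromJacGen}). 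Finally, applying (\ref{JactoGeg}) in the form $P_k^{(\gamma-1/2,\gamma-1/2)}(x)=\frac{(\gamma+1/2)_k}{(2\gamma)_k}\,C_k^\gamma(x)$ cancels the surviving $(2\gamma)_k$ and $(\gamma+\frac12)_k$, while $2(k+\gamma)/(2\gamma)$ collapses to $(k+\gamma)/\gamma$; collecting everything reproduces the right-hand side of (\ref{GegenFromJacGen}). Since (\ref{GegenFromJacGen}) is literally an instance of (\ref{JacGenGen}), convergence and the interchange of summations are inherited from the proof of Theorem~\ref{JacGeneralGen} and need not be revisited.

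The only thing left is to confirm that the hypotheses of Theorem~\ref{JacGeneralGen} are satisfied by the specialized parameters. The condition $\beta_\ast,\gamma_\ast>-1$ becomes $\gamma-\frac12>-1$, i.e.\ $\gamma>-\frac12$; the exceptional clause ``if $\beta_\ast,\gamma_\ast\in(-1,0)$ then $\beta_\ast+\gamma_\ast+1\neq0$'' becomes ``if $\gamma\in(-\frac12,\frac12)$ then $2\gamma\neq0$'', i.e.\ the single exclusion $\gamma\neq0$, so that $\gamma\in(-1/2,\infty)\setminus\{0\}$ exactly as stated; $\alpha_\ast=\alpha-\frac12\in\C$ is automatic; and the ranges $|\rho|<1$, $x\in[-1,1]$ are unchanged. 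Moreover $2\gamma>-1$ and $2\gamma\neq0$ guarantee $(2\gamma)_k\neq0$ for all $k$, so the cancellations above are legitimate. I do not expect a genuine obstacle here: the whole argument is a bookkeeping specialization of an already-proved theorem, the only mildly delicate point being to track the half-integer shifts consistently through the Pochhammer symbols and the ${}_3F_2$ parameter list.
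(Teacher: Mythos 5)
Your proposal is correct and is exactly the paper's argument: specialize Theorem~\ref{JacGeneralGen} via $\alpha\mapsto\alpha-\tfrac12$, $\beta,\gamma\mapsto\gamma-\tfrac12$, then convert $P_k^{(\gamma-1/2,\gamma-1/2)}$ to $C_k^\gamma$ with (\ref{JactoGeg}). Your bookkeeping of the Pochhammer symbols, the ${}_3F_2$ parameters, and the hypothesis $\gamma\in(-1/2,\infty)\setminus\{0\}$ all checks out.
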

\noindent {\bf Proof.} Using (\ref{JacGenGen}), mapping $\alpha\mapsto\alpha-1/2$ and
$\beta$, $\gamma\mapsto\gamma-1/2$, and using (\ref{JactoGeg}) completes the proof.
$\hfill\blacksquare$

\begin{cor}
Let 
$\alpha\in\C$,
$\rho\in\{z\in\C:|z|<1\}$, $x\in[-1,1]$. Then
\begin{equation}
\frac{(1+\RR+\rho)^{1/2}}{\RR(1+\RR-\rho)^{\alpha-1/2}}
=2^{1-\alpha}\sum_{k=0}^\infty\epsilon_k\frac{\left(\frac\alpha2\right)_k\,\left(\frac{\alpha+1}{2}\right)_k}
{(\alpha)_k\,k!}
\,{_3}F_2\left(\begin{array}{c}
k+\frac12,\alpha+2k,\alpha\\[0.1cm]
2k+1,\alpha+k\end{array};\rho\right)\rho^k\,T_k(x).
\label{ChebyT3F2}
\end{equation}
\end{cor}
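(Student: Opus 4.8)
The plan is to obtain (\ref{ChebyT3F2}) from the preceding corollary, equation (\ref{GegenFromJacGen}), by letting the Gegenbauer index $\gamma$ tend to $0^+$ with $\alpha$ held fixed, and converting the Gegenbauer polynomials of vanishing order into Chebyshev polynomials of the first kind by means of (\ref{defnChebyshev1intermsofGegenbauer}). First I would take the limit of the left-hand side of (\ref{GegenFromJacGen}): since $2^{\alpha+\gamma-1}\to 2^{\alpha-1}$ and $(1+\RR+\rho)^{\gamma-1/2}\to(1+\RR+\rho)^{-1/2}$ as $\gamma\to 0$, that side tends to
\[
\frac{2^{\alpha-1}}{\RR\,(1+\RR-\rho)^{\alpha-1/2}(1+\RR+\rho)^{-1/2}}
=2^{\alpha-1}\,\frac{(1+\RR+\rho)^{1/2}}{\RR\,(1+\RR-\rho)^{\alpha-1/2}},
\]
so after dividing by $2^{\alpha-1}$ this already reproduces the left-hand side of (\ref{ChebyT3F2}) together with the prefactor $2^{1-\alpha}$.

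Next I would pass to the limit on the right-hand side of (\ref{GegenFromJacGen}) term by term. Writing the $k$th summand as
\[
\frac{\left(\frac{\alpha+\gamma}{2}\right)_k\left(\frac{\alpha+\gamma+1}{2}\right)_k}{(\alpha+\gamma)_k\,(\gamma+1)_k}\,
{_3}F_2\!\left(\begin{array}{c}\gamma+k+\frac12,\alpha+\gamma+2k,\alpha-\gamma\\[0.1cm]\alpha+\gamma+k,2\gamma+2k+1\end{array};\rho\right)\rho^k\,\cdot\,\frac{k+\gamma}{\gamma}\,C_k^\gamma(x),
\]
the last factor converges to $\epsilon_k T_k(x)$ by (\ref{defnChebyshev1intermsofGegenbauer}), while every Pochhammer symbol and the ${_3}F_2$ converge to their values at $\gamma=0$ (in particular $(\gamma+1)_k\to k!$), so that the $k$th summand tends to
\[
\epsilon_k\,\frac{\left(\frac{\alpha}{2}\right)_k\left(\frac{\alpha+1}{2}\right)_k}{(\alpha)_k\,k!}\,
{_3}F_2\!\left(\begin{array}{c}k+\frac12,\alpha+2k,\alpha\\[0.1cm]2k+1,\alpha+k\end{array};\rho\right)\rho^k\,T_k(x),
\]
which is precisely the $k$th term on the right-hand side of (\ref{ChebyT3F2}).

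It then remains to justify interchanging the limit $\gamma\to 0$ with the infinite summation, and I expect this to be the only genuinely delicate point. Here one may use the duplication identity $\left(\frac{a}{2}\right)_k\left(\frac{a+1}{2}\right)_k=4^{-k}(a)_{2k}=4^{-k}(a)_k(a+k)_k$ with $a=\alpha+\gamma$ to rewrite the Pochhammer ratio as $4^{-k}(\alpha+\gamma+k)_k/(\gamma+1)_k$, which a Stirling estimate bounds by a fixed power of $k$ uniformly for $\gamma$ in a punctured neighborhood of $0$; combined with $|C_k^\gamma(x)|\le C_k^\gamma(1)$ for $x\in[-1,1]$ and $\gamma>0$, with uniform estimates for the ${_3}F_2$ when $|\rho|<1$, and with the factor $\rho^k$, one obtains a $\gamma$-independent majorant of the form $M k^p|\rho|^k$, which is summable because $|\rho|<1$; dominated convergence then legitimizes the interchange. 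Equating the two limits and dividing by $2^{\alpha-1}$ yields (\ref{ChebyT3F2}) for all $\alpha$ for which the denominators do not vanish, and the remaining (exceptional) values of $\alpha$ follow by analytic continuation, both sides being meromorphic in $\alpha$. In keeping with the conventions of the paper, this last interchange could also simply be left to the reader.
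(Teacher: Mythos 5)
Your proposal is correct and follows essentially the same route as the paper: the paper's proof consists precisely of taking the limit $\gamma\to 0$ in (\ref{GegenFromJacGen}) and invoking (\ref{defnChebyshev1intermsofGegenbauer}). Your additional dominated-convergence justification of exchanging the limit with the sum is sound but goes beyond the paper, which (consistent with its stated convention) leaves such interchanges to the reader.
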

\noindent {\bf Proof.}
Taking the limit as $\gamma\to0$ of (\ref{GegenFromJacGen}) and using
(\ref{defnChebyshev1intermsofGegenbauer}) completes the proof.
$\hfill\blacksquare$

\section{Expansions in Laguerre polynomials}
\label{Laguerrepolynomials}

\noindent The Laguerre polynomials $L_n^\alpha : \C \to \C$ can be defined in terms of Kummer's
confluent hypergeometric function of the first kind as follows (Olver {\it et al.}~(2010) \cite[(18.5.12)]{NIST})
\[
L_n^\alpha(z):=\frac{(\alpha+1)_n}{n!} 
M(-n,\alpha+1,z),\nonumber
\]
for $n\in\N_0$, and $\alpha > -1$.
The Laguerre function $L_\nu^\alpha:\C\to\C,$ which generalizes the Laguerre polynomials
is defined as follows 
(Erd\'{e}lyi {\it et al.}~(1981) \cite[(6.9.2.37), this equation is stated incorrectly therein]{Erdelyi})
for $\nu,\alpha\in\C$,
\begin{equation}
L_\nu^\alpha(z):=\frac{\Gamma(1+\nu+\alpha)}{\Gamma(\nu+1)\Gamma(\alpha+1)}M(-\nu,\alpha+1,z).
\label{Laguerredefn}
\end{equation}
The orthogonality relation for Laguerre polynomials can be found in Olver {\it et al.}~(2010) \cite[(18.2.1), (18.2.5), Table 18.3.1]{NIST}
\begin{equation}
\int_0^\infty x^\alpha e^{-x} L_n^\alpha(x) L_m^\alpha(x)dx=\frac{\Gamma(n+\alpha+1)}{n!} \delta_{n,m}.
\label{OrthoLag}
\end{equation}
The connection relation for Laguerre polynomials, given by Olver {\it et al.}~(2010) \cite[(18.18.18)]{NIST} (see also Ruiz \& Dehesa (2001) \cite{SanchezDehesa}),
is
\begin{equation}
L_n^\alpha(x)=\sum_{k=0}^n \frac{(\alpha-\beta)_{n-k}}{(n-k)!} L_k^\beta(x).
\label{LagConnect}
\end{equation}

\begin{thm}
Let $\alpha,\beta\in\R,$ $x> 0$, $\rho\in\C.$ Then
\begin{equation}
x^{-\alpha/2}  J_\alpha\left(2\sqrt{x\rho}\right)=
\rho^{\alpha/2} e^{-\rho} \sum_{k=0}^\infty\frac{\Gamma(\beta-\alpha+1)}{\Gamma(\beta+1+k)}
L_{\beta-\alpha}^{\alpha+k}(\rho)\rho^k L_k^\beta(x).
\label{LagforInt1}
\end{equation}
\end{thm}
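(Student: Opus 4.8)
The plan is to mimic exactly the series-rearrangement technique used for the Jacobi theorems: start from a known generating function for Laguerre polynomials, insert the one-free-parameter connection relation \eqref{LagConnect}, reverse the order of summation, and recognize the resulting inner sum as a Laguerre function via the confluent-hypergeometric representation \eqref{Laguerredefn}. First I would recall the generating function
\[
x^{-\alpha/2}J_\alpha\!\left(2\sqrt{x\rho}\right)=\rho^{\alpha/2}e^{-\rho}\sum_{n=0}^\infty\frac{1}{\Gamma(\alpha+1+n)}\rho^nL_n^\alpha(x),
\]
which follows from \eqref{BesselJ}, the definition of $L_n^\alpha$ via $M(-n,\alpha+1,x)$, and the identity $\sum_{n\ge0}L_n^\alpha(x)t^n=(1-t)^{-\alpha-1}e^{-xt/(1-t)}$ specialized appropriately (equivalently, it is \cite[(18.12.14)]{NIST} after a change of variable). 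This is the Laguerre analogue of \eqref{jacbessel}.

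Next I would substitute \eqref{LagConnect}, $L_n^\alpha(x)=\sum_{k=0}^n\frac{(\alpha-\beta)_{n-k}}{(n-k)!}L_k^\beta(x)$, into the generating function, producing the double sum
\[
\rho^{\alpha/2}e^{-\rho}\sum_{n=0}^\infty\sum_{k=0}^n\frac{1}{\Gamma(\alpha+1+n)}\frac{(\alpha-\beta)_{n-k}}{(n-k)!}\rho^nL_k^\beta(x).
\]
After justifying the interchange (see below), I swap the order to $\sum_{k=0}^\infty\sum_{n=k}^\infty$ and shift $n\mapsto n+k$, so the inner sum over $n\ge0$ becomes $\rho^k\sum_{n=0}^\infty\frac{(\alpha-\beta)_n}{n!\,\Gamma(\alpha+1+n+k)}\rho^n$. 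Writing $\Gamma(\alpha+1+n+k)=\Gamma(\alpha+1+k)(\alpha+1+k)_n$, this inner sum equals $\frac{1}{\Gamma(\alpha+1+k)}\,{}_1F_1(\alpha-\beta;\alpha+k+1;\rho)=\frac{1}{\Gamma(\alpha+1+k)}M(\alpha-\beta,\alpha+k+1,\rho)$. Comparing with \eqref{Laguerredefn} with $\nu=\beta-\alpha$ and order $\alpha+k$ gives $M(\alpha-\beta,\alpha+k+1,\rho)=\frac{\Gamma(\beta-\alpha+1)\Gamma(\alpha+k+1)}{\Gamma(\beta-\alpha+1+\alpha+k)}L_{\beta-\alpha}^{\alpha+k}(\rho)=\frac{\Gamma(\beta-\alpha+1)\Gamma(\alpha+k+1)}{\Gamma(\beta+k+1)}L_{\beta-\alpha}^{\alpha+k}(\rho)$. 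The two $\Gamma(\alpha+k+1)$ factors cancel, leaving exactly the coefficient $\frac{\Gamma(\beta-\alpha+1)}{\Gamma(\beta+k+1)}L_{\beta-\alpha}^{\alpha+k}(\rho)$ appearing in \eqref{LagforInt1}, and the $\rho^{\alpha/2}e^{-\rho}$ prefactor is carried along unchanged. The identity $L_{\beta-\alpha}^{\alpha+k}(\rho)$ being well defined for all complex $\rho$ and real $\beta-\alpha,\alpha+k$ is guaranteed by \eqref{Laguerredefn}.

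The main obstacle, as in the Jacobi case, is rigorously justifying the interchange of summations; I would handle it by the same absolute-convergence argument used for \eqref{eq1}. One needs $\sum_{n=0}^\infty\sum_{k=0}^n\frac{|\rho|^n}{|\Gamma(\alpha+1+n)|}\frac{|(\alpha-\beta)_{n-k}|}{(n-k)!}\,|L_k^\beta(x)|<\infty$ for fixed $x>0$. Here $|(\alpha-\beta)_{n-k}|/(n-k)!$ grows at most polynomially in $n-k$, $1/|\Gamma(\alpha+1+n)|$ decays super-exponentially, and for fixed $x>0$ the classical bound $|L_k^\beta(x)|\le K(1+k)^{\sigma}e^{x/2}$ (or the cruder $|L_k^\beta(x)|\le \frac{\Gamma(k+\beta+1)}{k!\,\Gamma(\beta+1)}e^{x}$ for $\beta\ge0$, with the standard adjustment for $\beta\in(-1,0)$) controls the $k$-sum. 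Since the $n!$-type decay from the gamma function dominates everything else, the double series converges absolutely and Fubini applies; summing the geometric-type series then gives the stated closed form. Finally, the restriction $\alpha,\beta\in\R$ with $x>0$ suffices for this bound, and one could extend in $\alpha$ or $\rho$ by analytic continuation if desired, though the stated hypotheses already cover the claim. Since the paper explicitly permits leaving such interchange justifications to the reader, one may present only the formal computation and cite the analogous argument around \eqref{eq1} for rigor.
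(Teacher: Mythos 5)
Your proposal is correct and follows essentially the same route as the paper: the generating function \cite[(18.12.14)]{NIST}, insertion of the connection relation (\ref{LagConnect}), a Szeg\H{o}-type bound on $\left|L_k^\beta(x)\right|$ together with the super-exponential decay of $1/\Gamma(\alpha+1+n)$ to justify the interchange, and identification of the shifted inner sum as $M(\alpha-\beta,\alpha+k+1,\rho)$, rewritten via (\ref{Laguerredefn}) to yield the stated coefficient. No substantive differences from the paper's argument.
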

\noindent {\bf Proof.}
Olver {\it et al.}~(2010) \cite[(18.12.14)]{NIST} give a generating function for Laguerre polynomials, namely
\begin{equation*}
x^{-\alpha/2} J_\alpha(2\sqrt{x\rho})=
\rho^{\alpha/2}e^{-\rho}\sum_{n=0}^\infty \frac{\rho^n}{\Gamma(\alpha+1+n)}L_n^\alpha(x),
\end{equation*}
where $J_\alpha$ is the Bessel function of the first kind (\ref{BesselJ}).
Using the Laguerre connection relation (\ref{LagConnect}) 
to replace the Laguerre polynomial in the generating 
function produces a double sum. 
In order to justify reversing the resulting order 
of summation, we demonstrate that
\begin{equation}
\sum_{n=0}^\infty |c_n|\sum_{k=0}^n |a_{nk}| \left|L_k^\beta(x)\right|<\infty, 
\label{doublesumLaguerre}
\end{equation}
where
\[ c_n=\frac{\rho^n}{\Gamma(\alpha+1+n)}\]
and
\begin{equation}
a_{nk}=\frac{(\alpha-\beta)_{n-k}}{(n-k)!}.
\label{ank}
\end{equation}
We assume that $\alpha,\beta\in\R$, $\rho\in\C$ and $x>0$.
It is known \cite[Theorem 8.22.1]{Szego} that
\begin{equation}
\bigl|L_n^\alpha(x)\bigr|
\le K_1(1+n)^{\sigma_1},
\label{Laguerrebound}
\end{equation}
where $K_1,\sigma_1=\frac{\alpha}{2}-\frac14$ are constants independent of $n$ (but depend on $x$ and $\alpha$).
We also have
\begin{equation}
|a_{nk}|\le (1+n-k)^{\sigma_2} \le (1+n)^{\sigma_2},
\label{ankbound}
\end{equation}
where $\sigma_2=|\alpha-\beta|$.
Therefore,
\[
\sum_{n=0}^\infty |c_n|\sum_{k=0}^n |a_{nk}| \left|L_k^\beta(x)\right|
\le K_1\sum_{n=0}^\infty \frac{|\rho|^n}{\Gamma(\alpha+1+n)} 
(1+n)^{\sigma_1+\sigma_2+1} <\infty .
\]
Reversing the order of summation and shifting 
the $n$-index by $k$ yields
\[
x^{-\alpha/2} J_\alpha(2\sqrt{x\rho})=
\rho^{\alpha/2} e^{-\rho}\sum_{k=0}^\infty\sum_{n=0}^\infty\frac{(\alpha-\beta)_n\, \rho^{n+k}}
{\Gamma(\alpha+1+n+k)\,n!} L_k^\beta(x).
\]
Using (\ref{Gammaz+n}) produces a Kummer's confluent hypergeometric function of the first kind as the coefficient of the expansion.
Using the definition of Laguerre functions (\ref{Laguerredefn}) to replace the confluent hypergeometric function completes the proof.
$\hfill\blacksquare$

Consider the generating function (Srivastava \& Manocha 
(1984) \cite[p.~209]{SriManocha})
\begin{equation}
e^{-x\rho}=\frac{1}{(1+\rho)^\alpha}\sum_{n=0}^\infty\rho^n L_n^{\alpha-n}(x),
\label{Lagueregeneratingfnthm3}
\end{equation}
for $\alpha\in\C,$ $\rho\in\left\{z\in\C:|z|<1\right\},$ $x>0$. 
Using the connection relation for Laguerre polynomials (\ref{LagConnect}) 
in the generating function (\ref{Lagueregeneratingfnthm3}),
yields a double sum. Reversing the order of the
summation and shifting the $n$-index by $k$ produces
\[
e^{-x\rho}=\frac{1}{(1+\rho)^\alpha}\sum_{k=0}^\infty\sum_{n=0}^\infty  \frac{(\alpha-n-k-\beta)_n}{n!}\rho^{n+k} L_k^\beta(x).\nonumber
\]
Using 
(\ref{binomialexp}),
(\ref{Gammaz+n}), 
and 
substituting $z=\rho/(1+\rho)$ yields the known generating function for 
Laguerre polynomials Olver {\it et al.}~(2010) \cite[(18.12.13)]{NIST}, namely
\begin{equation}
\exp\left(\frac{x\rho}{\rho-1}\right)=(1-\rho)^{\beta+1}\sum_{n=0}^\infty\rho^n L_n^\beta(x),
\label{BLagueregeneratingfnthm3}
\end{equation}
for $\beta\in\C$.
Note that using the connection relation for Laguerre polynomials (\ref{LagConnect})
on (\ref{BLagueregeneratingfnthm3})
leaves this generating function invariant.

\begin{thm}
Let $\lambda\in\C$, $\alpha\in\C\setminus-\N,$ $\beta>-1,$ $\rho\in\left\{z\in\C:|z|<1\right\},$ $x>0$. Then
\[
M\left(\lambda,\alpha+1,\frac{x\rho}{\rho-1}\right)=
(1-\rho)^{\lambda}
\sum_{k=0}^\infty  \frac{(\lambda)_k}{(\alpha+1)_k}~{_2}F_1\left(
\begin{array}{c}
\lambda+k,\alpha-\beta\\[0.2cm]
\alpha+1+k
\end{array};\rho\right)
\rho^k L_k^\beta(x).\nonumber
\]
\end{thm}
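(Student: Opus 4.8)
The plan is to mirror the series-rearrangement argument of the earlier theorems in this section, now taking as input the generating function for Laguerre polynomials in Olver {\it et al.}~(2010) \cite[(18.12.15)]{NIST}, namely
\[
(1-\rho)^{-\lambda}\,M\!\left(\lambda,\alpha+1,\frac{x\rho}{\rho-1}\right)=\sum_{n=0}^\infty\frac{(\lambda)_n}{(\alpha+1)_n}\,\rho^n L_n^\alpha(x),
\]
valid for $\alpha\in\C\setminus-\N$ and $|\rho|<1$; note this contains (\ref{BLagueregeneratingfnthm3}) as the special case $\lambda=\alpha+1$, since $M(\alpha+1,\alpha+1,z)=e^z$. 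First I would replace $L_n^\alpha(x)$ by the Laguerre connection relation (\ref{LagConnect}), producing on the right-hand side the double sum $\sum_{n=0}^\infty\frac{(\lambda)_n}{(\alpha+1)_n}\rho^n\sum_{k=0}^n\frac{(\alpha-\beta)_{n-k}}{(n-k)!}L_k^\beta(x)$.

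Next I would reverse the order of summation to $\sum_{k=0}^\infty L_k^\beta(x)\sum_{n=k}^\infty(\cdots)$ and shift the inner index $n\mapsto n+k$. Using $(\lambda)_{n+k}=(\lambda)_k(\lambda+k)_n$ and $(\alpha+1)_{n+k}=(\alpha+1)_k(\alpha+1+k)_n$, the factor $(\lambda)_k\rho^k/(\alpha+1)_k$ comes out of the inner sum and what remains is
\[
\sum_{n=0}^\infty\frac{(\lambda+k)_n\,(\alpha-\beta)_n}{(\alpha+1+k)_n}\frac{\rho^n}{n!}={}_2F_1\!\left(\begin{array}{c}\lambda+k,\alpha-\beta\\ \alpha+1+k\end{array};\rho\right).
\]
Multiplying both sides by $(1-\rho)^\lambda$ then yields the stated identity. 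The hypothesis $\beta>-1$ is used only to ensure the $L_k^\beta$ on the right are the usual orthogonal Laguerre polynomials, while $\alpha\in\C\setminus-\N$ guarantees $(\alpha+1)_n\neq0$ and that $M(\lambda,\alpha+1,\cdot)$ makes sense.

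The one step requiring care is the interchange of the two summations, and I expect it to be the main (though mild) obstacle; I would handle it exactly as in (\ref{doublesumLaguerre}). With $c_n=(\lambda)_n\rho^n/(\alpha+1)_n$ and $a_{nk}=(\alpha-\beta)_{n-k}/(n-k)!$, Stirling's formula gives $|(\lambda)_n/(\alpha+1)_n|\le K_1(1+n)^\tau$ and $|(\alpha-\beta)_{n-k}/(n-k)!|\le K_2(1+n)^{\sigma_2}$ for suitable real $\tau$ and $\sigma_2\ge0$, and \cite[Theorem 8.22.1]{Szego} gives $|L_k^\beta(x)|\le K_3(1+k)^{\sigma_1}$ for fixed $x>0$; hence
\[
\sum_{n=0}^\infty|c_n|\sum_{k=0}^n|a_{nk}|\,\bigl|L_k^\beta(x)\bigr|\le K_4\sum_{n=0}^\infty|\rho|^n(1+n)^{\sigma_1+\sigma_2+\tau+1}<\infty
\]
since $|\rho|<1$. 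This absolute convergence legitimizes the rearrangement, and the novelty relative to the earlier Laguerre theorem is only in recognizing the resulting inner series as the Gauss hypergeometric function above.
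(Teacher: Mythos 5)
Your proposal is correct and follows essentially the same route as the paper: the same starting generating function $(1-\rho)^{-\lambda}M(\lambda,\alpha+1,x\rho/(\rho-1))=\sum_n\frac{(\lambda)_n}{(\alpha+1)_n}\rho^nL_n^\alpha(x)$ (the paper cites it from Srivastava \& Manocha (1984), p.~132, rather than the DLMF), the connection relation (\ref{LagConnect}), the same absolute-convergence estimate in the style of (\ref{doublesumLaguerre}) using Szeg\H{o}'s bound and a polynomial bound on $|(\lambda)_n/(\alpha+1)_n|$, and then reversal, the shift $n\mapsto n+k$, and identification of the inner sum as the ${}_2F_1$. No gaps.
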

\noindent {\bf Proof.}
On p.~132 of Srivastava \& Manocha (1984) \cite{SriManocha} there is a generating function
for Laguerre polynomials, namely
\[
M\left(\lambda,\alpha+1,\frac{x\rho}{\rho-1}\right)=
(1-\rho)^{\lambda}
\sum_{n=0}^\infty \frac{(\lambda)_n\,\rho^n}{(\alpha+1)_n}L_n^\alpha(x).\nonumber
\]
Using the connection relation for Laguerre polynomials (\ref{LagConnect}) 
we obtain a double summation. 
In order to justify reversing the resulting order 
of summation, we demonstrate 
(\ref{doublesumLaguerre}),
where
\[ c_n=\frac{(\lambda)_n\rho^n}{(\alpha+1)_n}\]
and $a_{nk}$ is given in (\ref{ank}).
We assume that $\alpha\in\C\setminus-\N$, $\beta>-1,$
$|\rho|<1$ and $x>0$.
Given (\ref{Laguerrebound}), (\ref{ankbound}), then
\[
\sum_{n=0}^\infty |c_n|\sum_{k=0}^n |a_{nk}| \left|L_k^\beta(x)\right|
\le K_1\sum_{n=0}^\infty \frac{|(\lambda)_n||\rho|^n}{|(\alpha+1)_n|} 
(1+n)^{\sigma_1+\sigma_2+1}\le K_3\sum_{n=0}^\infty|\rho|^n
(1+n)^{\sigma_1+\sigma_2+\lambda-\alpha} <\infty,
\]
for some $K_3\in\R$.  Reversing the order of the summation and shifting the $n$-index by $k$ produces
\[
M\left(\lambda,\alpha+1,\frac{x\rho}{\rho-1}\right)=
(1-\rho)^{\lambda}
\sum_{k=0}^\infty\sum_{n=0}^\infty \frac{(\lambda)_{n+k}\,(\alpha-\beta)_n}{(\alpha+1)_{n+k}\,n!}
\rho^{n+k} L_k^\beta(x).\nonumber
\]
Then, using (\ref{Gammaz+n}) with simplification completes the proof.
$\hfill\blacksquare$

\section{Expansions in Wilson polynomials}
\label{Wilsonpolynomials}

The Wilson 
polynomials $W_n\left(x^2;a,b,c,d\right),$ originally introduced in 
Wilson (1980) \cite{Wilson},
can be defined in terms of a terminating generalized
hypergeometric series as follows (Olver {\it et al.}~(2010) \cite[(18.26.1)]{NIST})\\
\[
W_n(x^2;a,b,c,d):=(a+b)_n(a+c)_n(a+d)_n\,{}_4F_3
\left(
\begin{array}{c}
-n,n+a+b+c+d-1,a+ix,a-ix\\[0.2cm]
a+b,a+c,a+d\end{array};1\right).
\]
These polynomials are perhaps the most general hypergeometric orthogonal polynomials
in existence being at the very top of the Askey scheme which classifies these
orthogonal polynomials (see for instance \cite[Figure 18.21.1]{NIST}).  
The orthogonality relation for Wilson polynomials can be found in
Koekoek {\it et al.}~(2010) \cite[Section 9.1]{Koekoeketal}, namely
\begin{eqnarray}
&&\hspace{-0.3cm}\int_0^\infty
\left|\frac{\Gamma(a+ix)\Gamma(b+ix)\Gamma(c+ix)\Gamma(d+ix)}{\Gamma(2ix)}\right|^2
W_m\left(x^2;a,b,c,d\right)W_n\left(x^2;a,b,c,d\right)dx\nonumber\\[0.2cm]
&&\hspace{0.7cm}=\frac{2\pi n!\,\Gamma(n+a+b)\Gamma(n+a+c)\Gamma(n+a+d)\Gamma(n+b+c)\Gamma(n+b+d)\Gamma(n+c+d)}
{(2n+a+b+c+d-1)\Gamma(n+a+b+c+d-1)}\delta_{m,n}\nonumber
\label{orthowilson}
\end{eqnarray}
where $
\Re\,a,
\Re\,b,
\Re\,c,
\Re\,d>0,$ and non-real parameters occurring in conjugate pairs.
A connection relation with one free parameter for the Wilson polynomials 
is given by 
\cite[equation just below (15)]{SanchezDehesa}, namely
\begin{eqnarray}
&&\hspace{-0.9cm}W_n\left(x^2;a,b,c,d\right)=\sum_{k=0}^n \frac{n!}{k!(n-k)!}
\,W_k\left(x^2;a,b,c,h\right)
\nonumber\\[0.2cm]
&&\hspace{0.3cm}\times \frac{(n+a+b+c+d-1)_k\,(d-h)_{n-k}\,(k+a+b)_{n-k}\,(k+a+c)_{n-k}\,(k+b+c)_{n-k}}
{(k+a+b+c+h-1)_k\,(2k+a+b+c+h)_{n-k}}.
\label{Wilsonconnect}
\end{eqnarray}

In this section, we give a generalization of a generating function for 
Wilson polynomials.
This example is intended to be illustrative.  In Koekoek, Lesky \& Swarttouw {\it et al.}~(2010) 
\cite{Koekoeketal} for instance there are
four separate generating functions given for the Wilson polynomials.
The technique applied in the proof of the theorem presented in this section, can 
be easily applied to the rest of the generating functions for Wilson polynomials
in Koekoek, Lesky \& Swarttouw {\it et al.}~(2010) \cite{Koekoeketal}.  Generalizations of these
generating functions (and their corresponding definite integrals) can be extended 
by a well-established limiting procedure (see \cite[Chapter 9]{Koekoeketal}) to 
the continuous dual Hahn, continuous Hahn, Meixner--Pollaczek, pseudo Jacobi, 
Jacobi, Laguerre and Hermite polynomials.

\begin{thm}
Let $\rho\in\left\{z\in\C:|z|<1\right\},$ $x\in(0,\infty)$,
$
\Re\,a,
\Re\,b,
\Re\,c,
\Re\,d,
\Re\,h>0$ 
and non-real parameters $a,b,c,d,h$ occurring in conjugate pairs.
Then
\begin{eqnarray}
&&\hspace{-0.95cm}{_2}F_1\left(\begin{array}{c}
a+ix,\,b+ix\\[0.1cm]
a+b\end{array};\rho\right)
{_2}F_1\left(\begin{array}{c}
c-ix,\,d-ix\\[0.1cm]
c+d\end{array};\rho\right)
\nonumber\\[0.2cm]
&&\hspace{-0.5cm}=\sum_{k=0}^\infty
\frac{(k+a+b+c+d-1)_{k}}
{(k+a+b+c+h-1)_{k}\,(a+b)_k\,(c+d)_k\,k!}\nonumber\\[0.2cm]
&&\hspace{-0.0cm}\times
\,{_4}F_3\left(\begin{array}{c}
d-h,\,2k+a+b+c+d-1,\,k+a+c,\,k+b+c\\[0.1cm]
k+a+b+c+d-1,\,2k+a+b+c+h,\,k+c+d\end{array};\rho\right)
\!\rho^k\,W_k\left(x^2;a,b,c,h\right).
\label{wilson1}
\end{eqnarray}
\end{thm}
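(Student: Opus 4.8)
The plan is to reuse the series-rearrangement strategy of the preceding theorems. The starting point is one of the generating functions for Wilson polynomials listed in Koekoek, Lesky \& Swarttouw (2010) \cite[Section~9.1]{Koekoeketal}, namely
\[
{_2}F_1\left(\begin{array}{c} a+ix,\,b+ix\\ a+b\end{array};\rho\right)
{_2}F_1\left(\begin{array}{c} c-ix,\,d-ix\\ c+d\end{array};\rho\right)
=\sum_{n=0}^\infty \frac{W_n(x^2;a,b,c,d)}{(a+b)_n\,(c+d)_n\,n!}\,\rho^n ,
\]
whose left-hand side is precisely that of \eqref{wilson1}. Into the right-hand side I would substitute the connection relation \eqref{Wilsonconnect}, writing $W_n(x^2;a,b,c,d)=\sum_{k=0}^n a_{nk}\,W_k(x^2;a,b,c,h)$ with $a_{nk}$ the coefficient exhibited there, which turns the single sum into a double sum over $0\le k\le n<\infty$.

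Next I would interchange the order of summation and relabel $n\mapsto n+k$, so that the right-hand side becomes $\sum_{k=0}^\infty \rho^k\,W_k(x^2;a,b,c,h)\sum_{n=0}^\infty b_{nk}\,\rho^n$ with $b_{nk}=a_{n+k,k}/[(a+b)_{n+k}(c+d)_{n+k}(n+k)!]$. The interchange is justified by absolute convergence for $|\rho|<1$ in the manner of the proof of Theorem~\ref{JacGeneralGen} and of the Laguerre theorems above (a polynomial growth bound for $W_k$ together with elementary Pochhammer estimates for $a_{nk}$), and is one of those whose full verification we leave to the reader in the spirit of the remark in the Introduction.

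The substantive step is to collapse the inner $\rho$-series into the asserted ${}_4F_3$. Writing $A:=a+b+c+d-1$, the shifted connection coefficient is
\[
a_{n+k,k}=\frac{(n+k)!}{k!\,n!}\,
\frac{(n+k+A)_k\,(d-h)_n\,(k+a+b)_n\,(k+a+c)_n\,(k+b+c)_n}
{(k+a+b+c+h-1)_k\,(2k+a+b+c+h)_n} .
\]
In forming $b_{nk}$ three simplifications occur: the factor $(n+k)!$ cancels; splitting $(a+b)_{n+k}=(a+b)_k\,(k+a+b)_n$ removes the numerator factor $(k+a+b)_n$, while $(c+d)_{n+k}=(c+d)_k\,(k+c+d)_n$ leaves behind the denominator parameter $k+c+d$; and the factor $(n+k+A)_k$ is rewritten, via \eqref{Gammaz+n}, as $(n+k+A)_k=(k+A)_k\,(2k+A)_n/(k+A)_n$. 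The surviving $k$-dependent constants $(k+A)_k$, $(a+b)_k^{-1}$, $(c+d)_k^{-1}$, $(k+a+b+c+h-1)_k^{-1}$ and $(k!)^{-1}$ factor out of the $n$-sum and assemble into the prefactor $(k+a+b+c+d-1)_k/[(k+a+b+c+h-1)_k(a+b)_k(c+d)_k\,k!]$, while the remaining $n$-dependent quotient $(2k+a+b+c+d-1)_n(d-h)_n(k+a+c)_n(k+b+c)_n/[(k+a+b+c+d-1)_n(2k+a+b+c+h)_n(k+c+d)_n\,n!]$, summed over $n$, is exactly the ${}_4F_3$ in \eqref{wilson1}; analytic continuation then yields the stated parameter ranges.

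I expect the main obstacle to be the Pochhammer bookkeeping of this last step — in particular spotting the identity $(n+k+A)_k=(k+A)_k(2k+A)_n/(k+A)_n$, which simultaneously supplies the ${}_4F_3$ numerator parameter $2k+a+b+c+d-1$ and the denominator parameter $k+a+b+c+d-1$, and then checking that the leftover $k$-dependent factors recombine exactly into the stated prefactor. The convergence justification, by contrast, is routine once one notes that the original generating function already converges for $|\rho|<1$.
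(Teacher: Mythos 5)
Your route is the same as the paper's: start from the Koekoek--Lesky--Swarttouw generating function whose left-hand side is that of \eqref{wilson1}, insert the connection relation \eqref{Wilsonconnect}, interchange the two sums, shift $n\mapsto n+k$, and resum the inner series. Your Pochhammer bookkeeping --- splitting $(a+b)_{n+k}=(a+b)_k\,(k+a+b)_n$ and $(c+d)_{n+k}=(c+d)_k\,(k+c+d)_n$, and using $(n+k+A)_k=(k+A)_k\,(2k+A)_n/(k+A)_n$ with $A=a+b+c+d-1$ --- is correct and reproduces exactly the prefactor and the ${}_4F_3$ in \eqref{wilson1}; the paper compresses this step to ``shifting the $n$-index by $k$,'' so your write-up is more explicit there.

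The genuine weak point is the interchange of summation, which you wave through as routine on the strength of ``a polynomial growth bound for $W_k$.'' No such bound exists: $W_k(x^2;a,b,c,h)$ grows like $(k!)^3$ up to a polynomial factor --- this is precisely the estimate $|W_n(x^2;a,b,c,d)|\le K_1(n!)^3(1+n)^{\sigma_1}$ from Wilson's asymptotics \cite{Wilson91} used in \eqref{W2ineq1} --- and likewise the connection coefficients grow like $(n!/k!)^3$ times a polynomial, as in \eqref{W2ineq3}. Absolute convergence for $|\rho|<1$ therefore hinges on the cancellation of these factorial-cubed factors against $|c_n|\le K_2(1+n)^2|\rho|^n/(n!)^3$ (inequality \eqref{W2ineq2}, which itself needs $|(u)_j|\ge(\Re u)(j-1)!$), and the paper establishes all of this with a dedicated lemma of Pochhammer estimates \eqref{W2l2}--\eqref{W2l7}; that justification is the bulk of the paper's proof, and this theorem is one of the few for which the authors deliberately do \emph{not} leave the interchange to the reader. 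Your deferral does not make the theorem false --- the bounds do work out --- but as written your convergence argument rests on a false premise and omits the only nontrivial analytic content of the proof. (The closing appeal to analytic continuation is unnecessary here, since the parameters are already confined to $\Re\,a,\Re\,b,\Re\,c,\Re\,d,\Re\,h>0$; it is harmless but does no work.)
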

\noindent {\bf Proof.} Koekoek {\it et al.}~(2010) \cite[(1.1.12)]{Koekoeketal} give a generating
function for Wilson polynomials, namely
\begin{eqnarray}
{_2}F_1\left(\begin{array}{c}
a+ix,\,b+ix\\[0.1cm]
a+b\end{array};\rho\right)
{_2}F_1\left(\begin{array}{c}
c-ix,\,d-ix\\[0.1cm]
c+d\end{array};\rho\right)
=\sum_{n=0}^\infty
\frac{\rho^n\,W_n\left(x^2;a,b,c,d\right)}{(a+b)_n\,(c+d)_n\,n!}.\nonumber
\end{eqnarray}
Using the connection relation for Wilson polynomials (\ref{Wilsonconnect}) in the above generating function
produces a double sum. 
In order to justify reversing the summation symbols we show that
\[
\sum_{n=0}^\infty |c_n|\sum_{k=0}^n |a_{nk}|\left|W_k(x^2;a,b,c,h)\right|< \infty ,
\]
where
\[ c_n=\frac{\rho^n}{(a+b)_n(c+d)_nn!} ,\]
and $a_{nk}$ are the connection coefficients satisfying
\[ W_n(x^2;a,b,c,d)=\sum_{k=0}^n a_{nk}W_k(x^2;a,b,c,h) .\]
We assume that $a,b,c,d$ and $a,b,c,h$ are positive except for complex conjugate pairs with positive real parts, and $x>0$.
It follows from \cite[bottom of page 59]{Wilson91} that
\begin{equation}\label{W2ineq1}
 \left|W_n(x^2;a,b,c,d)\right|\le K_1 (n!)^3 (1+n)^{\sigma_1},
\end{equation}
where $K_1$ and $\sigma_1$ are positive constants independent of $n$.

\begin{lemma}
Let $j\in\N$, $k,n\in\N_0$, 
$z\in \C$,
$\Re u>0$, 
$w>-1$, 
$v\ge 0$, 
$x>0$. 
Then
\begin{eqnarray}
&&\hspace{-5.2cm}|(u)_j|\ge  (\Re u) (j-1)!, 
\label{W2l2} \\[0.1cm]
&&\hspace{-5.2cm}\frac{(v)_n}{n!}\le (1+n)^v, 
\label{W2l3} \\[0.1cm]
&&\hspace{-5.2cm}(n+w)_k\le \max\{1,2^w\}\frac{(n+k)!}{n!},\qquad (k\le n), 
\label{W2l4} \\[0.1cm]
&&\hspace{-5.2cm}|(k+z)_{n-k}|\le (1+n)^{|z|} \frac{n!}{k!},\qquad (k\le n),  
\label{W2l5} \\[0.1cm]
&&\hspace{-5.2cm}(k+x-1)_k \ge \min\left\{\frac{x}{2},\frac16\right\} 
\frac{(2k)!}{k!}, 
\label{W2l6} \\[0.1cm]
&&\hspace{-5.2cm}(2k+x)_{n-k}\ge \min\{x,1\} \frac1{1+n}\frac{(n+k)!}{(2k)!},
\qquad (k\le n).  \label{W2l7}
\end{eqnarray}
\end{lemma}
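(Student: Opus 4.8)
The plan is to prove the six inequalities in the Lemma one at a time, since each is an elementary estimate on Pochhammer symbols, and together they package the bounds needed to dominate the Wilson double series. I would group them: \eqref{W2l2} and \eqref{W2l6} are lower bounds where the argument starts near its index; \eqref{W2l3}, \eqref{W2l4}, \eqref{W2l5} are upper bounds of ``polynomial times factorial-ratio'' type; and \eqref{W2l7} is a lower bound of the same shape. The common technique throughout is to write the Pochhammer symbol as a product $\prod (\text{argument}+i)$, split off the problematic initial factors (those whose real part could be small or which could be large), and bound the remaining factors against a factorial or a factorial ratio, using $j!\le (1+j)^? \cdot(\text{something})$ only where a genuinely polynomial loss is acceptable.

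For \eqref{W2l2}: $|(u)_j|=|u|\,|u+1|\cdots|u+j-1|\ge (\Re u)\cdot 1\cdot 2\cdots(j-1)=(\Re u)(j-1)!$, using $|u+i|\ge \Re u + i \ge i$ for $i\ge 1$ and $|u|\ge \Re u$. For \eqref{W2l3}: $(v)_n/n! = \prod_{i=1}^n (v+i-1)/i = \prod_{i=1}^n(1+(v-1)/i)$; bound each factor by $(1+1/i)^{\max\{v-1,0\}}$-type reasoning, or more simply note $(v)_n/n!=\binom{n+v-1}{n}$ for integer-like behavior and in general $(v)_n \le (n+v)^{v}\cdot \text{const}$ — I expect the clean route is $(v)_n/n!\le \prod_{i=1}^n (1+v/i)\le \exp(v\sum 1/i)$ is too weak, so instead use the integral/induction bound $(v)_n\le \Gamma(v)^{-1}\Gamma(n+v)$ and Stirling, or just directly $\prod_{i=1}^n (v+i-1)/i \le \prod_{i=1}^n ((i)(1+v/i)^{?})$; the statement $(v)_n/n!\le (1+n)^v$ follows from $\Gamma(n+v)/(\Gamma(v)\,n!)\le (1+n)^{v}/\Gamma(v+1)\cdot\Gamma(v+1)$ — more carefully, from $(v)_n=(v)(v+1)\cdots(v+n-1)$ and $n!=1\cdot2\cdots n$, pairing $v+i-1$ with $i$ gives ratio $\le (n)^{v}$ up to constants; I would phrase it via monotonicity of $\Gamma$ and the bound $\Gamma(n+v)\le \Gamma(n+1)(n+1)^{v-1}$ valid for $v$ in a bounded range, checking that for $v\in[0,1]$ and $v\ge 1$ separately the claim $(v)_n\le (1+n)^v\, n!$ holds by induction on $n$ (base case trivial; inductive step needs $(v+n)\le (1+n)\cdot\frac{(2+n)^v}{(1+n)^v}$, i.e. $(v+n)(1+n)^{v}\le (1+n)(2+n)^{v}$, which holds since $v+n\le 1+n$ when $v\le 1$, and for $v\ge 1$ one uses a different grouping). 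For \eqref{W2l4}: $(n+w)_k=\prod_{i=0}^{k-1}(n+w+i)$; since $k\le n$, each factor $n+w+i\le (1+\max\{1,2^w\}^{1/k}\cdots)$ — cleanly, $n+w+i \le \max\{1,2^w\}(n+i+1)$ when... actually $n+w+i\le (n+1+i)\max\{1,w\}$ is false for $w<1$; the right move is $n+w+i\le \max\{1,2^{w}\}^{1/?}$... I would instead bound $(n+w)_k \le \frac{\Gamma(n+w+k)}{\Gamma(n+w)}$ and use $\Gamma(n+w+k)/\Gamma(n+w)\le C_w (n+k)!/n!$ with $C_w=\max\{1,2^w\}$, justified because $\Gamma(n+w+k)\le \Gamma(n+k+1)(n+k+1)^{w-1}$ and $\Gamma(n+w)\ge \Gamma(n+1)(n+1)^{w-1}/C_w$ for the stated range, so the ratio $\le C_w (n+k)!/n! \cdot ((n+k+1)/(n+1))^{w-1}\le C_w(n+k)!/n!\cdot 2^{|w-1|}$, then absorb. (The exact constant bookkeeping is the fiddly part.) For \eqref{W2l5}: $|(k+z)_{n-k}|=\prod_{i=0}^{n-k-1}|k+z+i|\le \prod_{i=0}^{n-k-1}(k+i+|z|)\le (1+n)^{|z|}\prod_{i=0}^{n-k-1}(k+i+1)=(1+n)^{|z|}\, n!/k!$, using $k+i+|z|\le (k+i+1)(1+|z|/(k+i+1))\le (k+i+1)(1+n)^{|z|/(n-k)}$ summed appropriately — again the clean bound is $k+|z|+i\le (k+1+i)\cdot$ a factor whose product over $n-k$ terms is $\le(1+n)^{|z|}$; this holds since $\prod(1+\tfrac{|z|-1}{k+1+i})$-style, or just: each factor $\le (k+i+1)\max\{1,|z|\}$ is too lossy, so use $|k+z+i|\le k+i+\max\{|z|,1\}\le (k+i+1)(1+n)^{\log(\cdots)}$ — in practice I would write $|(k+z)_{n-k}| \le (n-1+|z|)^{n-k}$-type... the cited form with exponent $|z|$ and ratio $n!/k!$ is standard (it appears e.g. in Wilson's thesis) and provable by the same $\Gamma$-ratio trick. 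For \eqref{W2l6}: $(k+x-1)_k = \prod_{i=0}^{k-1}(k+x-1+i) = (k+x-1)(k+x)\cdots(2k+x-2)$; the first factor $k+x-1\ge \min\{x/2,1/6\}\cdot$ something handles the case $x$ small or $k$ small (for $k\ge 1$, $k+x-1\ge x$ if $k\ge 1$... no, $k+x-1\ge x-1+1=x$ only if $k\ge1$ trivially since $k\ge1\Rightarrow k+x-1\ge x$; wait $x>-1$ only, so $k+x-1$ could be near $0$ when $k=1,x\to 0$ — that's where $1/6$ enters), while the remaining factors $(k+x)\cdots(2k+x-2)\ge k(k+1)\cdots(2k-2)$ compared against $(2k)!/k! = (k+1)(k+2)\cdots(2k)$; the ratio of these is $\frac{k}{2k}\cdot\frac{k+1}{2k-1}\cdots\ge \frac12\cdot$(stuff $\ge$ something), giving the $\min\{x/2,1/6\}$; I would split on $k=0$ (LHS $=1$, trivial), $k=1$ (LHS $=x$, RHS $=\min\{x/2,1/6\}\cdot 2 = \min\{x,1/3\}\le x$ when $x\le 1/3$ and $=1/3\le x$ else — so need $x\ge \min\{x,1/3\}$, true), and $k\ge 2$ (then $k+x-1\ge k-1\ge 1$ and the factorial comparison carries it). For \eqref{W2l7}: $(2k+x)_{n-k}=\prod_{i=0}^{n-k-1}(2k+x+i)\ge \prod_{i=0}^{n-k-1}(2k+i)\cdot\min\{x,1\}^{?}$ — the first factor $2k+x\ge \min\{x,1\}(2k+1)$, later factors $2k+x+i\ge (2k+i+1)$ for $i\ge 1$ when $x\ge 1$, or $\ge \min\{x,1\}(2k+i+1)$ uniformly, so the product $\ge \min\{x,1\}(2k+1)(2k+2)\cdots n = \min\{x,1\}\cdot n!/(2k)!$ — but wait the claimed RHS has $(n+k)!/(2k)!$ not $n!/(2k)!$; I need $\prod_{i=0}^{n-k-1}(2k+i+1) = (2k+1)\cdots(2k+(n-k)) = (n+k)!/(2k)!$, good, that's exactly it, so $(2k+x)_{n-k}\ge \min\{x,1\}\cdot (n+k)!/(2k)!$; the extra $1/(1+n)$ factor in the statement is slack I can just include (it only weakens the bound), or it compensates for using $\min\{x,1\}$ on only one factor versus the cleaner estimate — either way the stated inequality follows a fortiori.

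The main obstacle I anticipate is the precise constant bookkeeping in \eqref{W2l3}, \eqref{W2l4}, and \eqref{W2l5}, where the ``polynomial in $n$'' exponent must come out to exactly $v$, $0$ (inside $\max\{1,2^w\}$), and $|z|$ respectively; the cleanest uniform device is the elementary inequality $\Gamma(s+t)\le \Gamma(s)\,(s+t)^{t}$ for $s\ge 1$, $t\ge 0$ (from log-convexity of $\Gamma$, or from $\Gamma(s+t)/\Gamma(s)=\prod$ compared to an integral), applied with $s,t$ chosen to match each Pochhammer ratio, together with a matching lower bound $\Gamma(s+t)\ge \Gamma(s)\, s^{t}$. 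Once those two envelope inequalities are in hand, all six lines of the Lemma reduce to one- or two-line manipulations plus checking small-index base cases ($k=0,1$), so I would state the $\Gamma$-envelope bounds as a preliminary sub-claim and then dispatch \eqref{W2l2}--\eqref{W2l7} in sequence. $\hfill\blacksquare$
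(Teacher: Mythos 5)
Your overall strategy (elementary factor-by-factor estimates plus case checks at small indices) is the paper's, and your treatments of \eqref{W2l2} and \eqref{W2l6} essentially coincide with it. But for \eqref{W2l3}--\eqref{W2l5} you never arrive at a complete argument, and the unifying device you fall back on is partly false: the lower envelope $\Gamma(s+t)\ge\Gamma(s)\,s^{t}$ fails already at $s=1$, $t=\tfrac12$, since $\Gamma(3/2)\approx 0.886<1$; and the upper envelope gives, e.g., $(v)_n/n!\le (n+v)^{v-1}/\Gamma(v)$, which at $n=0$, $v=3$ is $9/2$, exceeding the required $(1+n)^v=1$. Since the lemma asserts the \emph{exact} constants $1$, $\max\{1,2^w\}$ and exponent $|z|$, there is no ``absorb'' available inside its proof. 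The missing idea is integer-rounding of the parameter: for \eqref{W2l3} pick $m\in\N_0$ with $m\le v\le m+1$, so that $(v)_n/n!\le (m+1)_n/n!=\prod_{j=1}^m\bigl(1+\tfrac{n}{j}\bigr)\le(1+n)^m\le(1+n)^v$ (your induction covers only $v\le 1$, and the promised ``different grouping'' for $v\ge1$ is never supplied); for \eqref{W2l4} split $-1<w\le1$, where $n!(n+w)_k\le(n+k)!$ directly, from $m\le w\le m+1$, where $n!(n+w)_k\le(n+k)!\prod_{j=1}^{m}\tfrac{n+k+j}{n+j}\le 2^m(n+k)!\le 2^w(n+k)!$ because $k\le n$ makes each ratio at most $2$; for \eqref{W2l5} split $|z|\le 1$ (giving $n!/k!$ outright) from $|z|>1$, where $k!\,|(k+z)_{n-k}|\le|z|(|z|+1)\cdots(|z|+n-1)=(|z|)_n\le n!\,(1+n)^{|z|}$ by \eqref{W2l3}. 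Citing that \eqref{W2l5} ``is standard'' is not a proof.

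Your argument for \eqref{W2l7} contains a genuine error. The strengthened bound you claim, $(2k+x)_{n-k}\ge\min\{x,1\}\,(n+k)!/(2k)!$ without the $1/(1+n)$, is false: for $x=0.95$, $k=1$, $n=6$ one has $(2.95)_5\approx 2385.2$, whereas $0.95\cdot 7!/2!=2394$. The per-factor estimate $2k+x+i\ge 2k+i+1$ fails for every $i\ge1$ once $x<1$, so a single factor $\min\{x,1\}$ cannot cover the cumulative loss over $n-k$ factors; the $1/(1+n)$ in the statement is not slack but exactly what pays for the index shift. The correct route (the paper's) is: for $k\ge1$, use $x>0$ to get $(2k+x)_{n-k}\ge(2k)_{n-k}=\frac{(n+k-1)!}{(2k-1)!}=\frac{(n+k)!}{(2k)!}\cdot\frac{2k}{n+k}\ge\frac{1}{1+n}\frac{(n+k)!}{(2k)!}$, and treat $k=0$ separately, which is where $\min\{x,1\}$ actually enters via $(x)_n\ge x\,(n-1)!\ge\min\{x,1\}\,\frac{n!}{1+n}$.
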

\begin{proof}
Let us consider
\[
|(u)_j|= |u||u+1|\dots |u+j-1|
\ge \Re u (\Re u+1)\dots(\Re u+j-1)
\ge (\Re u)(j-1)!.
\]
This completes the proof of (\ref{W2l2}).
Choose $m\in\N_0$ such that $m\le v\le m+1$.
Then
\begin{eqnarray*}
\hspace{-2.0cm}\frac{(v)_n}{n!}&\le & \frac{(m+1)_n}{n!} 
=\frac{(n+1)(n+2)\dots(n+m)}{m!}
=\left(1+n\right) \left(1+\frac{n}{2}\right)\dots \left(1+\frac{n}{m}\right)\\
&\le & (1+n)^m\le (1+n)^v .
\end{eqnarray*}
This completes the proof of (\ref{W2l3}).
If $-1<w\le 1$ then
\[ n!(n+w)_k \le n!(n+1)(n+2)\dots(n+k)=(n+k)! .\]
If $m\le w\le m+1$ with $m\in\N$ then
\[
 n!(n+w)_k \le(n+k)!\frac{n+k+1}{n+1}\frac{n+k+2}{n+2}\dots\frac{n+m+k}{n+m}
 \le 2^m (n+k)!\le 2^w (n+k)!
\]
This completes the proof of (\ref{W2l4}). 
If $|z|\le 1$ then
\[ |(k+z)_{n-k}|\le (k+1)(k+2)\dots n =\frac{n!}{k!} .\]
If $|z|>1$ then, using (\ref{W2l3}),
\[
k! |(k+z)_{n-k}|\le  |z|(|z|+1)\dots(|z|+n-1)=(|z|)_n \le n!(1+n)^{|z|} .
\]
This completes the proof of (\ref{W2l5}).
Let $k\ge 2$. Then
\[
(k+x-1)_k\ge  (k-1)k\dots (2k-2) =\frac{k(k-1)}{2k(2k-1)} \frac{(2k)!}{k!} \ge \frac16 \frac{(2k)!}{k!} .
\]
The cases $k=0,1$ can be verified directly.
This completes the proof of (\ref{W2l6}).
Let $k\ge 1$. Then
\[
(2k+x)_{n-k} \ge (2k)_{n-k}
= \frac{(n+k)!}{(2k)!} \frac{2k}{n+k}\ge \frac{1}{1+n}\frac{(n+k)!}{k!} .
\]
The case $k=0$ can be verified separately.
This completes the proof of (\ref{W2l7}).
\end{proof}

Using (\ref{W2l2}), we obtain, for $n\in\N$,
\[
 |c_n|=  \frac{|\rho|^n}{|(a+b)_n||(c+d)_n| n!} 
 \le   \frac{|\rho^n|}{\Re(a+b)\Re(c+d)(n-1)!^2 n!}
 = \frac{1}{\Re(a+b)\Re(c+d)} \frac{n^2|\rho|^n}{(n!)^3} .
\]
 Therefore, we obtain, for all $n\in\N_0$,
\begin{equation}\label{W2ineq2}
|c_n|\le K_2 (1+n)^2 \frac{|\rho|^n}{(n!)^3} ,
\end{equation}
where
\[ K_2=\max\left\{1,\frac{1}{\Re(a+b)\Re(c+d)}\right\} .\]
Using (\ref{W2l3}), we find
\begin{equation}\label{W2a1}
\left|\frac{(d-h)_{n-k}}{(n-k)!}\right|\le  (1+n)^{|d-h|}.
\end{equation}
Using (\ref{W2l4}), we obtain
\begin{equation}\label{W2a2}
|n!(n+a+b+c+d-1)_k|\le  K_3(n+k)!.
\end{equation}
From (\ref{W2l5}), we find
\begin{equation}\label{W2a3}
|(k+a+b)_{n-k}|\le  (1+n)^{\sigma_4} \frac{n!}{k!},
\end{equation}
and similar estimates with $a+c$ and $b+c$ in place of $a+b$.
Using (\ref{W2l6}), we obtain
\begin{equation}\label{W2a4}
|(k+a+b+c+h-1)_k|\ge K_4 \frac{(2k)!}{k!},
\end{equation}
where $K_4>0$.
Using (\ref{W2l7}), we obtain
\begin{equation}\label{W2a5}
|(2k+a+b+c+h)_{n-k}|\ge  \frac{K_5}{1+n}\frac{(n+k)!}{(2k)!},
\end{equation}
where $K_5>0$.

Combining \eqref{W2a1}, \eqref{W2a2}, \eqref{W2a3}, \eqref{W2a4}, \eqref{W2a5}, we find
\begin{equation}\label{W2ineq3}
|a_{nk}|\le K_6 (1+n)^{\sigma_6} \left(\frac{n!}{k!}\right)^3 .
\end{equation}
Now \eqref{W2ineq1}, \eqref{W2ineq2}, \eqref{W2ineq3} give
\begin{eqnarray*}
 \sum_{n=0}^\infty |c_n|\sum_{k=0}^n |a_{nk}|\left|W_k(x^2;a,b,c,h)\right| &\le &
 K_1K_2K_6\sum_{n=0}^\infty |\rho|^n (1+n)^2 \sum_{k=0}^n (1+n)^{\sigma_1+\sigma_6}\\
&= & K_1K_2K_6\sum_{n=0}^\infty |\rho|^n (1+n)^{\sigma_1+\sigma_6+3} <\infty
\end{eqnarray*}
since $|\rho|<1$.
Reversing the order of the summation and shifting the $n$-index by $k$ produces
the generalized expansion (\ref{wilson1}).
$\hfill\blacksquare$

\appendix
\section{Definite integrals}

As a consequence of the series expansions given above, one may
generate corresponding definite integrals (in a one-step procedure)
as an application of the orthogonality relation for these 
hypergeometric orthogonal polynomials.  Integrals of such 
sort are always of interest since they are very likely
to find applications in applied mathematics and theoretical physics.

\begin{cor}
Let $k\in\N_0$, 
$\alpha\in\C$, $\beta,\gamma>-1$ such that if $\beta,\gamma\in(-1,0)$ then $\beta+\gamma+1\neq 0$,
$\rho\in\{z\in\C:|z|<1\}$. Then
\begin{eqnarray}
&&\hspace{-0.0cm}\int_{-1}^1 \frac{(1-x)^\gamma(1+x)^\beta}
{\RR\left(1+\RR-\rho\right)^\alpha\left(1+\RR+\rho\right)^\beta} P_k^{(\gamma,\beta)}(x)dx\nonumber\\[0.2cm]
&&\hspace{+3.1cm}=\frac{2^{1+\gamma-\alpha}\Gamma(\gamma+k+1)\Gamma(\beta+k+1)
\left(\frac{\alpha+\beta+1}{2}\right)_k\,\left(\frac{\alpha+\beta+2}{2}\right)_k}
{\Gamma(\gamma+\beta+2)(\alpha+\beta+1)_k\,
\left(\frac{\gamma+\beta+2}{2}\right)_k\,\left(\frac{\gamma+\beta+3}{2}\right)_k\,k!}\nonumber\\[0.2cm]
&&\hspace{+6.7cm}\times\,{_3}F_2\left(\begin{array}{c}
\beta+k+1, \alpha+\beta+2k+1, \alpha-\gamma\\[0.1cm]
\alpha+\beta+k+1, \gamma+\beta+2k+2\end{array};\rho\right)\rho^k.\nonumber
\end{eqnarray}
\label{jacgenint1}
\end{cor}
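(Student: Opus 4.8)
The plan is to extract this definite integral directly from Theorem~\ref{JacGeneralGen} by pairing the expansion~(\ref{JacGenGen}) against a single Jacobi polynomial and invoking orthogonality. First I would fix $k\in\N_0$, multiply both sides of~(\ref{JacGenGen}) by $(1-x)^\gamma(1+x)^\beta P_k^{(\gamma,\beta)}(x)$, and integrate over $x\in[-1,1]$. On the left-hand side this reproduces exactly the integral in the statement, up to the factor $2^{\alpha+\beta}$ that appears in the numerator of the left-hand side of~(\ref{JacGenGen}); equivalently, the corollary's integral equals $2^{-\alpha-\beta}$ times the integrated right-hand side.

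Next I would interchange the sum over the expansion index with the integral. This is legitimate because the estimates used in the proof of Theorem~\ref{JacGeneralGen} — in particular~(\ref{eq2}) and the bound obtained for~(\ref{eq1}), which is uniform in $x\in[-1,1]$ — show that the rearranged series converges absolutely and uniformly on $[-1,1]$, so term-by-term integration against the bounded weight $(1-x)^\gamma(1+x)^\beta$ is valid. After the interchange, the orthogonality relation~(\ref{JacobiOrthogonality}) with $\alpha$ replaced by $\gamma$ collapses the series: only the term whose Jacobi index equals $k$ survives, contributing the norm factor $2^{\gamma+\beta+1}\Gamma(\gamma+k+1)\Gamma(\beta+k+1)\big/\bigl[(2k+\gamma+\beta+1)\Gamma(\gamma+\beta+k+1)\,k!\bigr]$.

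Finally I would collect the surviving constants and simplify. The factor $(2k+\gamma+\beta+1)$ from the coefficient in~(\ref{JacGenGen}) cancels its reciprocal from the orthogonality norm; the Pochhammer symbol $(\gamma+\beta+1)_k$ combines with $\Gamma(\gamma+\beta+k+1)$ via~(\ref{Gammaz+n}) to leave $1/\Gamma(\gamma+\beta+1)$, which together with the prefactor $1/(\gamma+\beta+1)$ becomes $1/\Gamma(\gamma+\beta+2)$; and the powers of two combine as $2^{-\alpha-\beta}\cdot 2^{\gamma+\beta+1}=2^{1+\gamma-\alpha}$. The remaining Pochhammer factors $\left(\tfrac{\alpha+\beta+1}{2}\right)_k$, $\left(\tfrac{\alpha+\beta+2}{2}\right)_k$, $(\alpha+\beta+1)_k$, $\left(\tfrac{\gamma+\beta+2}{2}\right)_k$, $\left(\tfrac{\gamma+\beta+3}{2}\right)_k$, the $k!$, the ${}_3F_2$ with argument $\rho$, and the $\rho^k$ are carried through unchanged, yielding the asserted formula. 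I do not expect a genuine obstacle here: the sole point requiring care is the justification of the sum–integral interchange, and that is obtained by reusing the uniform convergence estimates already established for Theorem~\ref{JacGeneralGen}.
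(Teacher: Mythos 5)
Your proposal is correct and follows essentially the same route as the paper: multiply (\ref{JacGenGen}) by $(1-x)^\gamma(1+x)^\beta P_k^{(\gamma,\beta)}(x)$, integrate over $[-1,1]$, apply the orthogonality relation (\ref{JacobiOrthogonality}) with $\alpha$ replaced by $\gamma$, and simplify the constants exactly as you describe. Your added justification of the term-by-term integration via the uniform bounds from the proof of Theorem~\ref{JacGeneralGen} is a detail the paper leaves implicit, but it does not change the argument.
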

\noindent {\bf Proof.}
Multiplying both sides of (\ref{JacGenGen}) by $P_n^{(\gamma,\beta)}(x)(1-x)^\gamma(1+x)^\beta$
and integrating from $-1$ to $1$ using the orthogonality relation for Jacobi 
polynomials (\ref{JacobiOrthogonality}) with simplification completes the proof.
$\hfill\blacksquare$

\begin{cor}
Let $k\in\N_0$, 
$\alpha\in\C$, $\beta,\gamma>-1$ such that if $\beta,\gamma\in(-1,0)$ then $\beta+\gamma+1\neq 0$,
$\rho\in\{z\in\C:|z|<1\}$. Then
\begin{eqnarray}
&&\hspace{-0.5cm}\int_{-1}^1(1-x)^{\gamma-\alpha/2}(1+x)^{\beta/2}
J_\alpha\left(\sqrt{2(1-x)\rho}\right)I_\beta\left(\sqrt{2(1+x)\rho}\right)
P_k^{(\gamma,\beta)}(x)dx\nonumber\\[0.2cm]
&&\hspace{+1.0cm}=\frac{2^{\gamma+\beta/2-\alpha/2+1}\,
\Gamma(\gamma+k+1)\left(\frac{\alpha+\beta+1}{2}\right)_k\,
\left(\frac{\alpha+\beta+2}{2}\right)_k}
{\Gamma(\gamma+\beta+2)\,\Gamma(\alpha+k+1)\,(\alpha+\beta+1)_k\,
\left(\frac{\gamma+\beta+2}{2}\right)_k\,\left(\frac{\gamma+\beta+3}{2}\right)_k\,k!}\nonumber\\[0.2cm]
&&\hspace{+3.0cm}\times\,{_2}F_3\left(\begin{array}{c}
2k+\alpha+\beta+1,\alpha-\gamma\\[0.1cm]
\alpha+\beta+k+1,\gamma+\beta+2k+2,\alpha+1+k\end{array};\rho\right)\rho^{\alpha/2+\beta/2+k}.\nonumber
\end{eqnarray}
\end{cor}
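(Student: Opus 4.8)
The plan is to obtain this identity from the generating-function generalization (\ref{Jac2F3}) of Theorem \ref{ThmJac2F3} in exactly the way Corollary \ref{jacgenint1} is obtained from (\ref{JacGenGen}): multiply both sides of (\ref{Jac2F3}) by $P_n^{(\gamma,\beta)}(x)(1-x)^\gamma(1+x)^\beta$, integrate over $x\in(-1,1)$, and apply the Jacobi orthogonality relation (\ref{JacobiOrthogonality}).

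First I would move the purely algebraic prefactor on the left-hand side of (\ref{Jac2F3}) outside the integral. The constant $2^{(\alpha+\beta)/2}\rho^{-(\alpha+\beta)/2}$ comes out in front, while the surviving factor $(1-x)^{-\alpha/2}(1+x)^{-\beta/2}$ combines with the weight $(1-x)^\gamma(1+x)^\beta$ to give exactly $(1-x)^{\gamma-\alpha/2}(1+x)^{\beta/2}$, which is the weight appearing in the integrand of the statement. On the right-hand side I would integrate the series term by term; this interchange of summation and integration is justified just as in the generating-function theorems above, the series converging absolutely and uniformly for $x\in[-1,1]$. Orthogonality then collapses the sum to its $k=n$ term and contributes the factor $2^{\gamma+\beta+1}\Gamma(\gamma+n+1)\Gamma(\beta+n+1)/\bigl[(2n+\gamma+\beta+1)\Gamma(\gamma+\beta+n+1)\,n!\bigr]$.

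What remains is routine simplification using (\ref{Gammaz+n}). The factor $2n+\gamma+\beta+1$ cancels the one already present in (\ref{Jac2F3}); the ratio $(\gamma+\beta+1)_n/(\gamma+\beta+1)$ reduces $\Gamma(\gamma+\beta+n+1)$ to $1/\Gamma(\gamma+\beta+2)$; and one has $\Gamma(\beta+n+1)/\bigl[(\beta+1)_n\Gamma(\beta+1)\bigr]=1$ together with $1/\bigl[(\alpha+1)_n\Gamma(\alpha+1)\bigr]=1/\Gamma(\alpha+n+1)$. Solving for the integral and merging the powers $2^{\gamma+\beta+1}\cdot 2^{-(\alpha+\beta)/2}=2^{\gamma+\beta/2-\alpha/2+1}$ and $\rho^{n}\cdot\rho^{(\alpha+\beta)/2}=\rho^{\alpha/2+\beta/2+n}$ yields the stated identity after renaming $n$ as $k$. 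I anticipate no real obstacle here; the one step worth a sentence of care is the term-by-term integration of the series, dispatched exactly as in the earlier proofs.
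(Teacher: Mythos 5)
Your proposal is correct and is exactly the paper's route: multiply (\ref{Jac2F3}) by $P_k^{(\gamma,\beta)}(x)(1-x)^\gamma(1+x)^\beta$, integrate over $[-1,1]$, apply the Jacobi orthogonality relation (\ref{JacobiOrthogonality}), and simplify via (\ref{Gammaz+n}), with the powers of $2$ and $\rho$ combining as you state. The paper's proof is just the one-line "same as Corollary \ref{jacgenint1}, applied to (\ref{Jac2F3})," so your write-up merely spells out the same computation.
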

\noindent {\bf Proof.}
Same as the proof of Corollary \ref{jacgenint1}, except apply
to both sides of (\ref{Jac2F3}).
$\hfill\blacksquare$

\begin{cor}
Let $\beta\in\C$, $\alpha,\gamma>-1$ such that if $\alpha,\gamma\in(-1,0)$ then $\alpha+\gamma+1\neq 0$,
$\rho\in\{z\in\C:|z|<1\}\setminus(-1,0]$. Then
\begin{eqnarray}
&&\hspace{-0.5cm}\int_{-1}^1 \frac{(1+x)^{\beta/2}(1-x)^\gamma}
{\RR^{\alpha+1}}
P_\alpha^{-\beta}\left(\frac{1+\rho}{\RR}\right)P_k^{(\gamma,\beta)}(x)dx\nonumber\\[0.2cm]
&&\hspace{+4.0cm}=\frac{2^{\gamma+\beta/2+1}\Gamma(\gamma+k+1)(\alpha+\beta+1)_{2k}}
{(1-\rho)^{\alpha-\gamma}\rho^{(\gamma+1)/2}k!}
P_{\gamma-\alpha}^{-\gamma-\beta-2k-1}\left(\frac{1+\rho}{1-\rho}\right).\nonumber
\end{eqnarray}
\end{cor}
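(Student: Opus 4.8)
The plan is to read off this definite integral directly from Theorem~\ref{ultrajac}, in exactly the one-step manner used to prove Corollary~\ref{jacgenint1} and the corollary immediately following it. First I would multiply both sides of the expansion~(\ref{Jacwithalpha}) by $P_k^{(\gamma,\beta)}(x)(1-x)^\gamma(1+x)^\beta$ and integrate over $x\in[-1,1]$. On the left-hand side, the weight $(1+x)^\beta$ absorbs the prefactor $(1+x)^{-\beta/2}$ to produce precisely the integrand $(1+x)^{\beta/2}(1-x)^\gamma\,\RR^{-\alpha-1}P_\alpha^{-\beta}\!\left((1+\rho)/\RR\right)$ displayed in the statement, so the left-hand side becomes the asserted integral.

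On the right-hand side, granting that the interchange of summation and integration is legitimate, each term acquires the factor $\int_{-1}^1 P_j^{(\gamma,\beta)}(x)P_k^{(\gamma,\beta)}(x)(1-x)^\gamma(1+x)^\beta\,dx$. By the orthogonality relation~(\ref{JacobiOrthogonality}) (with $\alpha$ there replaced by $\gamma$), this vanishes unless $j=k$, in which case it equals $2^{\gamma+\beta+1}\Gamma(\gamma+k+1)\Gamma(\beta+k+1)\big/\big[(2k+\gamma+\beta+1)\Gamma(\gamma+\beta+k+1)\,k!\big]$. Thus the series collapses to its $k$-th term.

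What then remains is routine simplification. I would multiply the above orthogonality constant by the $k$-th coefficient $(2k+\gamma+\beta+1)(\gamma+\beta+1)_k(\alpha+\beta+1)_{2k}/(\beta+1)_k$ of~(\ref{Jacwithalpha}) and by the global prefactor $\Gamma(\gamma+\beta+1)\big/\big[2^{\beta/2}\Gamma(\beta+1)(1-\rho)^{\alpha-\gamma}\rho^{(\gamma+1)/2}\big]$. Using~(\ref{Gammaz+n}) to write $(\gamma+\beta+1)_k=\Gamma(\gamma+\beta+k+1)/\Gamma(\gamma+\beta+1)$ and $(\beta+1)_k=\Gamma(\beta+k+1)/\Gamma(\beta+1)$, the factors $2k+\gamma+\beta+1$, $\Gamma(\gamma+\beta+k+1)$, and $\Gamma(\beta+k+1)$ all cancel, the powers of two combine to $2^{\gamma+\beta/2+1}$, and the associated Legendre function $P_{\gamma-\alpha}^{-\gamma-\beta-2k-1}\!\left((1+\rho)/(1-\rho)\right)$ is carried along unchanged, yielding exactly the right-hand side in the statement.

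The only substantive point is the justification of term-by-term integration. Since $[-1,1]$ is compact and, by~(\ref{eq2}), $P_k^{(\gamma,\beta)}$ is bounded there by a fixed power of $1+k$, a straightforward estimate of the Legendre-function coefficients of~(\ref{Jacwithalpha}), together with the coefficient bounds already established for Theorem~\ref{JacGeneralGen}, shows that the series converges absolutely and uniformly on $[-1,1]$ against an integrable majorant, so dominated convergence applies. Following the convention stated in the introduction, I would record this briefly and leave the remaining estimates to the reader; I do not anticipate any real obstacle beyond this bookkeeping.
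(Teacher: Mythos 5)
Your proposal is correct and is exactly the paper's argument: multiply both sides of (\ref{Jacwithalpha}) by $P_k^{(\gamma,\beta)}(x)(1-x)^\gamma(1+x)^\beta$, integrate over $[-1,1]$, apply the orthogonality relation (\ref{JacobiOrthogonality}) with parameters $(\gamma,\beta)$, and simplify via (\ref{Gammaz+n}); your cancellations reproduce the stated constant precisely. The only difference is that you sketch a justification for the term-by-term integration, which the paper simply leaves to the reader per its stated convention.
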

\noindent {\bf Proof.}
Same as the proof of Corollary \ref{jacgenint1}, except apply
to both sides of (\ref{Jacwithalpha}).
$\hfill\blacksquare$

\begin{cor}
Let $\beta\in\C$, $\alpha,\gamma>-1$ such that if $\alpha,\gamma\in(-1,0)$ then $\alpha+\gamma+1\neq 0$,
$\rho\in(0,1)$. Then
\begin{eqnarray}
&&\hspace{-0.5cm}\int_{-1}^1 \frac{(1-x)^{\alpha/2}(1+x)^\gamma}
{\RR^{\beta+1}}
{\mathrm P}_\beta^{-\alpha}\left(\frac{1-\rho}{\RR}\right)
P_k^{(\alpha,\gamma)}(x)dx\nonumber\\[0.2cm]
&&\hspace{+4.0cm}=\frac{2^{\gamma+\alpha/2+1}\Gamma(\gamma+k+1)(\alpha+\beta+1)_{2k}}
{(1+\rho)^{\beta-\gamma}\rho^{(\gamma+1)/2}k!}
P_{\gamma-\beta}^{-\gamma-\alpha-2k-1}\left(\frac{1-\rho}{1+\rho}\right).\nonumber
\end{eqnarray}
\end{cor}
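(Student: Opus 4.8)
The plan is to imitate the one‑step argument used in Corollary~\ref{jacgenint1} and the corollaries immediately preceding this one, but now applied to the companion expansion (\ref{Jacwithalphacom}) of Corollary~\ref{othercooljac}. The right‑hand side of (\ref{Jacwithalphacom}) expands its left‑hand side in the Jacobi polynomials $P_k^{(\alpha,\gamma)}(x)$, which are orthogonal on $[-1,1]$ with respect to the weight $(1-x)^\alpha(1+x)^\gamma$. So the natural move is to multiply both sides of (\ref{Jacwithalphacom}) by $P_n^{(\alpha,\gamma)}(x)(1-x)^\alpha(1+x)^\gamma$ and integrate over $[-1,1]$.

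First I would observe that multiplying the left‑hand side of (\ref{Jacwithalphacom}) by $(1-x)^\alpha(1+x)^\gamma$ yields exactly $(1-x)^{\alpha/2}(1+x)^\gamma\,\RR^{-\beta-1}\,{\mathrm P}_\beta^{-\alpha}\!\left(\tfrac{1-\rho}{\RR}\right)$, which is the integrand on the left of the claimed identity apart from the factor $P_n^{(\alpha,\gamma)}(x)$. Next I would integrate the right‑hand series term by term and apply the Jacobi orthogonality relation (\ref{JacobiOrthogonality}) with $\beta$ replaced by $\gamma$; only the term with summation index equal to $n$ survives, contributing the normalization constant $2^{\alpha+\gamma+1}\Gamma(\alpha+n+1)\Gamma(\gamma+n+1)/\bigl[(2n+\alpha+\gamma+1)\Gamma(\alpha+\gamma+n+1)n!\bigr]$. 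Relabelling $n$ by $k$ then gives the stated integral.

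The remaining work is purely algebraic: rewrite the Pochhammer symbols $(\gamma+\alpha+1)_k$ and $(\alpha+1)_k$ via (\ref{Gammaz+n}) and collect factors. The factor $(2k+\gamma+\alpha+1)$ from (\ref{Jacwithalphacom}) cancels against $(2k+\alpha+\gamma+1)$ in the orthogonality constant, while $\Gamma(\gamma+\alpha+1)$, $\Gamma(\alpha+1)$, $\Gamma(\alpha+\gamma+k+1)$, and $\Gamma(\alpha+k+1)$ all cancel; what is left is $2^{\gamma+\alpha/2+1}\Gamma(\gamma+k+1)(\alpha+\beta+1)_{2k}/\bigl[(1+\rho)^{\beta-\gamma}\rho^{(\gamma+1)/2}k!\bigr]$ times ${\mathrm P}_{\gamma-\beta}^{-\gamma-\alpha-2k-1}\!\left(\tfrac{1-\rho}{1+\rho}\right)$, matching the right‑hand side exactly. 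The only point needing a word of care is the interchange of summation and integration, and even that is routine given the bounds established in the proof of Theorem~\ref{JacGeneralGen}, since on $[-1,1]$ the weight $(1-x)^\alpha(1+x)^\gamma$ is integrable and the Jacobi polynomials are uniformly bounded by a fixed power of their degree; there is no genuine obstacle.
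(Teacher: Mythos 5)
Your proposal is correct and takes essentially the same route as the paper, whose proof of this corollary is precisely to multiply both sides of (\ref{Jacwithalphacom}) by $P_k^{(\alpha,\gamma)}(x)(1-x)^\alpha(1+x)^\gamma$, integrate over $[-1,1]$, and invoke the Jacobi orthogonality relation (\ref{JacobiOrthogonality}) with parameters $(\alpha,\gamma)$, followed by the same cancellations of gamma functions and the factor $(2k+\alpha+\gamma+1)$ that you carry out. Your simplification to $2^{\gamma+\alpha/2+1}\Gamma(\gamma+k+1)(\alpha+\beta+1)_{2k}/\bigl[(1+\rho)^{\beta-\gamma}\rho^{(\gamma+1)/2}k!\bigr]$ times ${\mathrm P}_{\gamma-\beta}^{-\gamma-\alpha-2k-1}\bigl(\tfrac{1-\rho}{1+\rho}\bigr)$ is accurate, the function being the Ferrers function inherited from (\ref{Jacwithalphacom}).
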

\noindent {\bf Proof.}
Same as the proof of Corollary \ref{jacgenint1}, except apply
to both sides of (\ref{Jacwithalphacom}).
$\hfill\blacksquare$

\begin{cor}
Let $n\in\N_0,$ 
$\alpha,\mu\in\C$, $\nu\in(-1/2,\infty)\setminus\{0\}$,
$\rho\in(0,1)$. Then
\begin{eqnarray}
&&\hspace{-0.15cm}\int_{-1}^1\left(1-x^2\right)^{\nu-\mu/2-1/4}
P_{\mu-\alpha-1/2}^{1/2-\mu}\left(
\RR
+\rho\right)
{\mathrm P}_{\mu-\alpha-1/2}^{1/2-\mu}\left(
\RR
-\rho\right)
C_n^\nu(x)dx\nonumber\\[0.2cm]
&&\hspace{+2.5cm}=\frac{\sqrt{\pi}2^{1/2-\mu}(2\nu)_n(\alpha)_n(2\mu-\alpha)_n(\mu)_n\Gamma(\frac12+\nu)}
{(2\mu)_n\Gamma(\frac12+\mu+n)\Gamma(1+\nu+n)\Gamma(\frac12+\mu)n!}\rho^{n+\mu-1/2}\nonumber\\[0.2cm]
&&\hspace{+4.5cm}\times\,{_6}F_5\left(
\begin{array}{c}
\frac{\alpha+n}{2},\frac{\alpha+n+1}{2},\frac{2\mu-\alpha+n}{2},
\frac{2\mu-\alpha+n+1}{2}
,\mu+n
,\mu-\nu
\\[0.2cm]
\frac{2\mu+n}{2},\frac{2\mu+n+1}{2},\frac{\mu+n+\frac12}{2},\frac{\mu+n+\frac32}{2},1+\nu+n
\end{array};\rho^2\right).\nonumber
\end{eqnarray}
\label{posdefintGegen}
\end{cor}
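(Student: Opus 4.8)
The plan is to obtain Corollary~\ref{posdefintGegen} from the Gegenbauer expansion \eqref{Gegenwith6F5} by the same one-step orthogonality procedure used in Corollary~\ref{jacgenint1}, the only new ingredient being the Legendre duplication formula in the final simplification. First I would specialize \eqref{Gegenwith6F5} by setting $\lambda=-\alpha$, so that the degree $\lambda+\mu-\tfrac12$ of the associated Legendre and Ferrers functions becomes $\mu-\alpha-\tfrac12$, the Pochhammer factors $(-\lambda)_n,(2\mu+\lambda)_n$ become $(\alpha)_n,(2\mu-\alpha)_n$, and the upper parameters of the ${_6}F_5$ take exactly the form $\tfrac{\alpha+n}{2},\tfrac{\alpha+n+1}{2},\tfrac{2\mu-\alpha+n}{2},\tfrac{2\mu-\alpha+n+1}{2},\mu+n,\mu-\nu$ displayed in the statement. (Restricting to $\rho\in(0,1)$, as in the corollary, also makes the factor $(\rho/2)^{\mu-1/2}$ on the left-hand side unambiguous for complex $\mu$.)

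Next I would multiply both sides of the resulting identity by $C_n^\nu(x)(1-x^2)^{\nu-1/2}$ and integrate over $x\in[-1,1]$. On the left-hand side the Gegenbauer weight $(1-x^2)^{\nu-1/2}$ combines with the factor $(1-x^2)^{1/4-\mu/2}$ already present in \eqref{Gegenwith6F5} to produce $(1-x^2)^{\nu-\mu/2-1/4}$, which is precisely the integrand in the corollary. On the right-hand side I would interchange the summation over $n$ with the integral and then collapse the series using the Gegenbauer orthogonality relation \eqref{OrthoGegen}, so that only the term whose summation index equals $n$ survives. The justification of this interchange is of the same type already carried out for the Jacobi and Wilson theorems --- a uniform bound on $C_k^\nu$ on $[-1,1]$ together with $|\rho|<1$ gives absolute convergence of the relevant double sum --- and, consistently with the rest of the paper, I would leave the detailed estimates to the reader.

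Finally I would simplify the constant. The factor $(\nu+n)$ coming from \eqref{Gegenwith6F5} cancels the $(n+\nu)$ in the orthogonality norm; the ratio $1/\bigl(\nu\,(\nu+1)_n\bigr)$ reduces to $\Gamma(\nu)/\Gamma(\nu+1+n)$ via \eqref{Gammaz+n}; and $\Gamma(2\nu+n)$ is written as $(2\nu)_n\Gamma(2\nu)$ with $\Gamma(2\nu)=\pi^{-1/2}2^{2\nu-1}\Gamma(\nu)\Gamma(\nu+\tfrac12)$. Collecting the powers of two --- $2^{\mu-1/2}$ from $(\rho/2)^{\mu-1/2}$, $2^{1-2\nu}$ from the norm in \eqref{OrthoGegen}, and $2^{2\nu-1}$ from duplication --- leaves $2^{1/2-\mu}$, while $(\rho/2)^{\mu-1/2}\rho^n$ becomes $2^{1/2-\mu}\rho^{n+\mu-1/2}$; rewriting $(\mu+\tfrac12)_n=\Gamma(\mu+n+\tfrac12)/\Gamma(\mu+\tfrac12)$ then yields exactly the stated right-hand side. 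The only genuine obstacle is the term-by-term integration, which is entirely analogous to the interchanges handled earlier; everything else is bookkeeping with gamma-function identities.
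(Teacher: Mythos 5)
Your proposal is correct and takes essentially the same route as the paper: the paper's proof is exactly the one-step orthogonality argument (multiply (\ref{Gegenwith6F5}) by $C_n^\nu(x)(1-x^2)^{\nu-1/2}$, integrate, apply (\ref{OrthoGegen}), and simplify), with the relabeling $\lambda=-\alpha$ and the duplication-formula bookkeeping you spell out subsumed under its "with simplification."
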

\noindent {\bf Proof.}
Multiplying both sides of (\ref{Gegenwith6F5}) by $C_n^\nu(x)(1-x^2)^{\nu-1/2}$ and integrating
from $-1$ to $1$ using the orthogonality relation for Gegenbauer 
polynomials (\ref{OrthoGegen})
with simplification completes the proof.
$\hfill\blacksquare$

\begin{cor}
Let $k\in\N_0$, 
$\alpha\in\C$, $\gamma\in(-1/2,\infty)\setminus\{0\}$,
$\rho\in\{z\in\C:|z|<1\}$. Then
\begin{eqnarray}
&&\hspace{-0.0cm}\int_{-1}^1 \frac{(1-x^2)^{\gamma-1/2}}
{\RR\left(1+\RR-\rho\right)^{\alpha-1/2}\left(1+\RR+\rho\right)^{\gamma-1/2}}C_k^\gamma(x)dx\nonumber\\[0.2cm]
&&\hspace{+0.3cm}=\frac{\sqrt{\pi}\,2^{1-\gamma-\alpha}\Gamma(\gamma+1/2)
\left(\frac{\alpha+\gamma}{2}\right)_k\,\left(\frac{\alpha+\gamma+1}{2}\right)_k\,(2\gamma)_k}
{\Gamma(\gamma+k+1)(\alpha+\gamma)_k\,k!}\,
{_3}F_2\left(\begin{array}{c}
\gamma+k+\frac12,\alpha+\gamma+2k,\alpha-\gamma\\[0.1cm]
\alpha+\gamma+k,2\gamma+2k+1\end{array};\rho\right)\rho^k.\nonumber
\end{eqnarray}
\end{cor}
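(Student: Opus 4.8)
The plan is to obtain this integral directly from the Gegenbauer expansion (\ref{GegenFromJacGen}) by an application of orthogonality, in the same one-step fashion as in Corollary \ref{jacgenint1}. First I would multiply both sides of (\ref{GegenFromJacGen}) by $C_k^\gamma(x)(1-x^2)^{\gamma-1/2}$ and integrate over $x\in[-1,1]$. Since $|\rho|<1$ while $\max_{x\in[-1,1]}|C_n^\gamma(x)|$ grows at most polynomially in $n$ (which follows from \cite[Theorem 7.32.1]{Szego} through the identification (\ref{JactoGeg}), exactly as in the proof of Theorem \ref{JacGeneralGen}), the series on the right converges uniformly on $[-1,1]$, so term-by-term integration is legitimate. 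By the Gegenbauer orthogonality relation (\ref{OrthoGegen}), integrating term by term collapses the sum to its $k$-th term and produces the factor $\frac{\pi 2^{1-2\gamma}\Gamma(2\gamma+k)}{(k+\gamma)\Gamma^2(\gamma)\,k!}$.

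Next I would divide through by the prefactor $2^{\alpha+\gamma-1}$ appearing on the left-hand side of (\ref{GegenFromJacGen}), so that it is absorbed into the right-hand side, and then observe two cancellations: the $(k+\gamma)$ in the numerator of the coefficient in (\ref{GegenFromJacGen}) cancels the $(k+\gamma)$ coming from orthogonality, and the leading $1/\gamma$ cancels the $\gamma$ hidden in $(\gamma+1)_k=\Gamma(\gamma+1+k)/(\gamma\Gamma(\gamma))$. All that then remains is routine gamma-function bookkeeping: write $\Gamma(2\gamma+k)=(2\gamma)_k\Gamma(2\gamma)$ and apply the Legendre duplication formula $\Gamma(2\gamma)=\pi^{-1/2}2^{2\gamma-1}\Gamma(\gamma)\Gamma(\gamma+1/2)$ to the factor $\Gamma(2\gamma)/\Gamma^2(\gamma)$. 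Collecting the three powers of two — $2^{-(\alpha+\gamma-1)}\cdot 2^{1-2\gamma}\cdot 2^{2\gamma-1}=2^{1-\alpha-\gamma}$ — and carrying the Pochhammer symbols $\left(\frac{\alpha+\gamma}{2}\right)_k$, $\left(\frac{\alpha+\gamma+1}{2}\right)_k$, $(\alpha+\gamma)_k$, the ${}_3F_2$, and the factor $\rho^k$ along unchanged, one reads off precisely the asserted closed form.

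I do not expect any genuine obstacle here: this is exactly the orthogonality procedure described at the opening of the appendix, and the only mildly delicate point is the justification of the summation--integration interchange, which is immediate from the geometric decay in $|\rho|$ set against the polynomial growth of the Gegenbauer polynomials on $[-1,1]$. Equivalently, one may say that the proof is the same as that of Corollary \ref{jacgenint1} applied to (\ref{GegenFromJacGen}), together with the extra --- purely cosmetic --- use of the duplication formula for the gamma function.
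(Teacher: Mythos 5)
Your proposal is correct and follows exactly the paper's route: multiply both sides of (\ref{GegenFromJacGen}) by $C_k^\gamma(x)(1-x^2)^{\gamma-1/2}$, integrate term by term over $[-1,1]$, and collapse the series via the Gegenbauer orthogonality relation (\ref{OrthoGegen}), with the remaining gamma-function simplification (including the duplication formula) yielding the stated closed form. The paper's own proof is precisely this one-step orthogonality argument applied to (\ref{GegenFromJacGen}), so there is nothing to add beyond your (correct) bookkeeping.
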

\noindent {\bf Proof.}
Same as in the proof of Corollary \ref{posdefintGegen}, except
apply to both sides of (\ref{GegenFromJacGen}).
$\hfill\blacksquare$

\begin{cor}
Let $k\in\N_0$, $\alpha\in\C$,
$\rho\in\{z\in\C:|z|<1\}$. Then
\begin{eqnarray}
\int_{-1}^1\frac{(1+\RR+\rho)^{1/2}}{\RR(1+\RR-\rho)^{\alpha-1/2}(1-x^2)^{1/2}}T_k(x)dx
=\frac{\pi\left(\frac\alpha2\right)_k\,\left(\frac{\alpha+1}{2}\right)_k}
{2^{\alpha-1}(\alpha)_k\,k!}\,
{_3}F_2\left(\begin{array}{c}
k+\frac12,\alpha+2k,\alpha\\[0.1cm]
2k+1,\alpha+k\end{array};\rho\right)\rho^k.\nonumber
\end{eqnarray}
\end{cor}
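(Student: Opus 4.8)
The plan is to follow exactly the one-step procedure of Corollary~\ref{jacgenint1}, now applied to the generalized generating function (\ref{ChebyT3F2}) together with the orthogonality of the Chebyshev polynomials of the first kind. First I would record the relevant orthogonality relation: with the substitution $x=\cos\theta$ (or by taking $\mu\to 0$ in (\ref{OrthoGegen}) in conjunction with (\ref{defnChebyshev1intermsofGegenbauer})) one has
\[
\int_{-1}^1 T_m(x)T_n(x)(1-x^2)^{-1/2}\,dx=\frac{\pi}{\epsilon_n}\,\delta_{m,n},
\]
where $\epsilon_n=2-\delta_{n,0}$ is the Neumann factor already appearing in (\ref{defnChebyshev1intermsofGegenbauer}).

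Next I would multiply both sides of (\ref{ChebyT3F2}) by $T_k(x)(1-x^2)^{-1/2}$ and integrate over $x\in(-1,1)$. Interchanging the integral with the sum over the summation index of (\ref{ChebyT3F2}) is legitimate because $|T_n(x)|\le 1$ on $[-1,1]$, the weight $(1-x^2)^{-1/2}$ is integrable there, and the coefficient series converges absolutely for $|\rho|<1$; the latter follows from the estimates used in the proof of Theorem~\ref{JacGeneralGen} carried through the specialization (\ref{JactoGeg}) and the limit $\gamma\to 0$ that produced (\ref{ChebyT3F2}), or it can be checked directly from the ${}_3F_2$-coefficients using binomial-type bounds on Pochhammer symbols. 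Only the term whose summation index equals $k$ survives the integration.

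Finally, the orthogonality relation collapses the remaining sum, the factor $\epsilon_k$ appearing in the coefficient of (\ref{ChebyT3F2}) cancels against the $\epsilon_k^{-1}$ in the normalization $\pi/\epsilon_k$, and after combining the surviving powers of $2$ one obtains precisely the asserted identity. I expect the only point requiring care to be this $\epsilon_k$ bookkeeping, since for $k=0$ both the expansion coefficient in (\ref{ChebyT3F2}) and the Chebyshev normalization constant differ by a factor of $2$ from their generic $k\ge 1$ values, so one must verify that the two discrepancies indeed cancel; everything else is identical to the preceding definite-integral corollaries. As a consistency check, one could alternatively derive the formula by letting $\gamma\to 0$ in the Gegenbauer definite integral obtained from (\ref{GegenFromJacGen}), although that route requires resolving the $0/0$ behaviour of $(2\gamma)_k/\gamma$ against $C_k^\gamma(x)$ and is less transparent than the direct argument above.
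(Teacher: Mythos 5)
Your proof is correct and follows essentially the same route as the paper: multiply (\ref{ChebyT3F2}) by $T_k(x)(1-x^2)^{-1/2}$, integrate over $(-1,1)$, and invoke the Chebyshev orthogonality relation $\int_{-1}^1 T_m(x)T_n(x)(1-x^2)^{-1/2}dx=\pi\,\delta_{m,n}/\epsilon_n$, with the $\epsilon_k$ cancellation and $2^{1-\alpha}=2^{-(\alpha-1)}$ giving the stated constant. Your added remarks on justifying the sum--integral interchange and the alternative $\gamma\to 0$ derivation go beyond the paper's one-line proof but do not change the argument.
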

\noindent {\bf Proof.}
Multiplying both sides of (\ref{ChebyT3F2}) by $T_k(x)(1-x^2)^{-1/2}$ and integrating from $-1$ to $1$
using the orthogonality relation for Chebyshev polynomials 
of the first kind,
Olver {\it et al.}~(2010) \cite[(18.2.1), (18.2.5), Table 18.3.1]{NIST}
\[
\int_{-1}^1
T_m(x)
T_n(x)
(1-x^2)^{-1/2}
dx
=
\frac{\pi}{\epsilon_n}
\delta_{m,n},
\]
with simplification completes the proof.
$\hfill\blacksquare$

\begin{cor}
Let $k\in\N_0$, $\alpha,\beta\in\R$, $\rho\in\C\setminus\{0\}$. Then
\[
\int_0^\infty x^{\beta-\alpha/2} e^{-x}  J_\alpha(2\sqrt{\rho x})L_k^\beta(x)dx
=\Gamma(\beta-\alpha+1) \frac{e^{-\rho}\rho^{k+\alpha/2}}{k!} L_{\beta-\alpha}^{\alpha+k}(\rho).\nonumber
\]
\label{LagInt1}
\end{cor}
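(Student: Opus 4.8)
The plan is to read this integral off in one step from the expansion (\ref{LagforInt1}) established above, exactly as the earlier corollaries of this appendix are obtained from their parent expansions. First I would relabel the summation index in (\ref{LagforInt1}) from $k$ to $n$, then multiply both sides by $x^{\beta}e^{-x}L_k^{\beta}(x)$ and integrate over $(0,\infty)$. On the left-hand side the factors combine as $x^{-\alpha/2}\cdot x^{\beta}=x^{\beta-\alpha/2}$, reproducing the integrand on the left of the corollary (with $2\sqrt{x\rho}=2\sqrt{\rho x}$). On the right-hand side, granting that the summation and integration may be interchanged, the Laguerre orthogonality relation (\ref{OrthoLag}) with parameter $\beta$ annihilates every term except $n=k$, leaving
\[
\rho^{\alpha/2}e^{-\rho}\,\frac{\Gamma(\beta-\alpha+1)}{\Gamma(\beta+k+1)}\,L_{\beta-\alpha}^{\alpha+k}(\rho)\,\rho^{k}\cdot\frac{\Gamma(k+\beta+1)}{k!}.
\]
The two occurrences of $\Gamma(\beta+k+1)$ cancel and $\rho^{\alpha/2}\rho^{k}=\rho^{k+\alpha/2}$, which is precisely the stated right-hand side. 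The hypothesis $\rho\in\C\setminus\{0\}$ is needed only so that $\rho^{\alpha/2}$ and $\rho^{k+\alpha/2}$ are defined; for $\beta>-1$ the integral on the left converges absolutely, and the identity persists for the remaining $\beta$ by analytic continuation, since rewriting the right-hand side via (\ref{Laguerredefn}) and (\ref{Gammaz+n}) makes it manifestly entire in $\beta$.

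The only step requiring genuine justification, and hence the main obstacle, is the interchange of sum and integral on the right-hand side; I would argue as in the proof of (\ref{LagforInt1}). Using (\ref{Laguerredefn}) together with (\ref{Gammaz+n}) one writes
\[
\frac{\Gamma(\beta-\alpha+1)}{\Gamma(\beta+n+1)}\,L_{\beta-\alpha}^{\alpha+n}(\rho)=\frac{M(\alpha-\beta,\alpha+n+1,\rho)}{\Gamma(\alpha+n+1)},
\]
and since $M(\alpha-\beta,\alpha+n+1,\rho)\to 1$ as $n\to\infty$ — in particular it is bounded uniformly in $n$ for fixed $\rho$ — the $n$th term of the series behaves like $O\!\bigl(|\rho|^{n}/\Gamma(\alpha+n+1)\bigr)$ up to a factor growing at most polynomially in $n$, where the polynomial factor comes from a uniform bound for $|L_n^{\beta}(x)|$ of the type (\ref{Laguerrebound}). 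Since the remaining factor $x^{\beta}e^{-x}\bigl|L_k^{\beta}(x)\bigr|$ is fixed and integrable on $(0,\infty)$ when $\beta>-1$, the associated double series of absolute integrals converges, so Tonelli and then Fubini apply. Everything after the interchange is the elementary gamma-function bookkeeping displayed above, which completes the proof.
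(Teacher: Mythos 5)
Your proposal is correct and takes essentially the same route as the paper: multiply (\ref{LagforInt1}) by $x^\beta e^{-x}L_{k}^\beta(x)$, integrate over $(0,\infty)$, apply the Laguerre orthogonality relation (\ref{OrthoLag}), and simplify the gamma factors. Your explicit justification of the sum--integral interchange is a harmless bonus rather than a genuine departure, since the paper deliberately leaves such interchange justifications to the reader.
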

\noindent {\bf Proof.}
Multiplying both sides of (\ref{LagforInt1}) by $x^\beta e^{-x} L_{k'}^\beta(x)$ for $k'\in\N_0$, integrating over $(0,\infty)$
and using the orthogonality relation for Laguerre polynomials (\ref{OrthoLag}) completes the proof.
$\hfill\blacksquare$

\medskip

\noindent Applying the process of Corollary \ref{LagInt1} to both 
sides of (\ref{BLagueregeneratingfnthm3})
produces the definite integral for Laguerre polynomials
\begin{equation}
\int_0^\infty x^\beta \exp\left(\frac{x}{\rho-1}\right) L_n^\beta(x)dx
=\frac{\Gamma(n+\beta+1)(1-\rho)^{\beta+1}}{n!}\rho^n,
\label{LagInt2}
\end{equation}
which is a specific case of the definite integral given by Gradshteyn \& Ryzhik (2007)
\cite[(7.414.8)]{Grad}. This is not surprising since (\ref{LagInt2}) was found using the
generating function for Laguerre polynomials.
$\hfill\blacksquare$

\begin{cor}
Let $k\in\N_0$, $\rho\in\left\{z\in\C:|z|<1\right\},$ 
$\Re\,a,
\Re\,b,
\Re\,c,
\Re\,d,
\Re\,h>0$ and non-real parameters occurring in conjugate pairs.
Then
\begin{eqnarray*}
&&\hspace{-0.10cm}\int_0^{\infty} 
\,{_2}F_1\left(\begin{array}{c}
a+ix,\,b+ix\\[0.1cm]
a+b\end{array};\rho\right)
{_2}F_1\left(\begin{array}{c}
c-ix,\,d-ix\\[0.1cm]
c+d\end{array};\rho\right)
W_k\left(x^2;a,b,c,h\right)
w(x)\,dx\\[0.2cm]
&&\hspace{1.9cm}
=\frac
{
2\pi\Gamma(a+b)
\Gamma(k+a+c)
\Gamma(k+a+h)
\Gamma(k+b+c)
\Gamma(k+b+h)
\Gamma(k+c+h)
}
{(c+d)_k
\Gamma(2k+a+b+c+h)
\left\{(k+a+b+c+d-1)_{k}\right\}^{-1}
}\nonumber\\[0.2cm]
&&\hspace{4.4cm}\times\,{}_4F_3\left(
\begin{array}{c}
d-h,2k+a+b+c+d-1,k+a+c,k+b+c\\[0.2cm]
k+a+b+c+d-1,2k+a+b+c+h,k+c+d
\end{array};\rho
\right)\rho^k.
\end{eqnarray*}
where $w:(0,\infty)\to\R$ is defined by 
\[
w(x):=\left|\frac{\Gamma(a+ix)\Gamma(b+ix)\Gamma(c+ix)\Gamma(h+ix)}
{\Gamma(2ix)}\right|^2.
\]
\end{cor}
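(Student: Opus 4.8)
\noindent {\bf Proof.}
The plan is to obtain this definite integral in one step from the generalized generating function (\ref{wilson1}), exactly in the manner of the preceding corollaries of this appendix (cf.\ Corollary \ref{jacgenint1}): we expand a known function in Wilson polynomials and read off its Fourier--Wilson coefficients by orthogonality. Concretely, I would fix $k\in\N_0$, multiply both sides of (\ref{wilson1}) by $W_k\left(x^2;a,b,c,h\right)w(x)$, and integrate over $x\in(0,\infty)$. Since $w$ is precisely the orthogonality weight for the Wilson polynomials with parameters $a,b,c,h$, interchanging the sum over the expansion index with the integral on the right-hand side and then applying the orthogonality relation for Wilson polynomials (Koekoek {\it et al.}~(2010) \cite[Section 9.1]{Koekoeketal}) collapses the series to the single term whose summation index equals $k$.

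The only step that is not purely formal is this interchange of summation and integration. I would justify it along the same lines as the summation interchange carried out in the proof of the theorem yielding (\ref{wilson1}): the growth bound (\ref{W2ineq1}) on $\left|W_k\left(x^2;a,b,c,h\right)\right|$, together with the estimates collected in the Lemma and the hypothesis $|\rho|<1$, control the tail of the series, while the rapid decay of $w(x)$ as $x\to\infty$ (the factor $|\Gamma(2ix)|^{-2}$ against the four Gamma factors in the numerator) makes each weighted term integrable; alternatively, as elsewhere in this paper, one may leave this justification to the reader. I expect this to be the main obstacle, but it mirrors work already done.

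It then remains only to simplify. The surviving term contributes the normalization constant
\[
\frac{2\pi\,k!\,\Gamma(k+a+b)\Gamma(k+a+c)\Gamma(k+a+h)\Gamma(k+b+c)\Gamma(k+b+h)\Gamma(k+c+h)}
{(2k+a+b+c+h-1)\,\Gamma(k+a+b+c+h-1)}
\]
from the right-hand side of the Wilson orthogonality relation, multiplied by the coefficient of $W_k\left(x^2;a,b,c,h\right)$ in (\ref{wilson1}). Using (\ref{Gammaz+n}), the ratio $\Gamma(k+a+b)/(a+b)_k$ becomes $\Gamma(a+b)$, and $(k+a+b+c+h-1)_k=\Gamma(2k+a+b+c+h-1)/\Gamma(k+a+b+c+h-1)$ turns the remaining denominator into $\Gamma(2k+a+b+c+h)$; the factor $k!$ cancels, the ${}_4F_3$ and $\rho^k$ pass through unchanged, and one arrives at the stated identity (with $(k+a+b+c+d-1)_k$ written in the denominator with exponent $-1$). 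No genuine obstacle is anticipated beyond the interchange already discussed.
$\hfill\blacksquare$
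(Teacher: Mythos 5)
Your proposal is correct and follows essentially the same route as the paper: multiply both sides of (\ref{wilson1}) by $W_{k}\left(x^2;a,b,c,h\right)w(x)$, integrate over $(0,\infty)$, and apply the Wilson orthogonality relation (with $d$ replaced by $h$) to isolate the $k$th coefficient. Your extra simplification steps (using (\ref{Gammaz+n}) to produce $\Gamma(a+b)$ and $\Gamma(2k+a+b+c+h)$) and your remarks on justifying the sum--integral interchange are sound details that the paper's terse proof leaves implicit.
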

\noindent {\bf Proof.}
Multiplying both sides of 
(\ref{wilson1})
by 
\[
\left|
\frac{\Gamma(a+ix)\Gamma(b+ix)\Gamma(c+ix)\Gamma(h+ix)}
{\Gamma(2ix)}
\right|^2W_{k'}(x^2;a,b,c,h),
\]
for $k'\in\N_0$,
integrating over $x\in(0,\infty)$
and using the orthogonality relation for Wilson polynomials 
(\ref{orthowilson})
completes the proof.
$\hfill\blacksquare$

\subsection*{Acknowledgements}

This work was conducted while H.~S.~Cohl was a National 
Research Council Research Postdoctoral Associate in the 
Applied and Computational Mathematics Division
at the National Institute of Standards and Technology, 
Gaithersburg, Maryland, U.S.A.
C.~MacKenzie would like to thank the Summer Undergraduate 
Research Fellowship program at the National Institute of 
Standards and Technology for financial support while this 
research was carried out.


\end{document}